\theoremstyle{plain}
\newtheorem{theorem}{Theorem}[section]
\newtheorem{lemma}[theorem]{Lemma}
\newtheorem{cor}[theorem]{Corollary}
\theoremstyle{definition}
\newtheorem{defn}[theorem]{Definition}
\newtheorem{fact}[theorem]{Fact}
\newtheorem{question}[theorem]{Question}
\title{\small \textbf{SPLITTING FAMILIES, REAPING FAMILIES AND FAMILIES OF PERMUTATIONS ASSOCIATED WITH ASYMPTOTIC DENSITY}}
\author{\normalsize David Valderrama\footnote{This project was partially supported by grant INV-2024-199-3207, Facultad de Ciencias, Universidad de los Andes. }}
\date{}
\begin{document}

\renewcommand{\abstractname}{\vspace{-\baselineskip}}

\maketitle

\begin{abstract}
\noindent \textit{Abstract.} We investigate several relations between cardinal characteristics of the continuum related with the asymptotic density of the natural numbers and some known cardinal invariants. Specifically, we study the cardinals of the form $\mathfrak{s}_X$, $\mathfrak{r}_X$ and $\mathfrak{dd}_{X,Y}$ introduced in \cite{FARKAS_KLAUSNER_LISCHKA_2023} and \cite{brech2024densitycardinals}, answering some questions raised in these papers. In particular, we prove that $\mathfrak{s}_0=$ cov$(\mathcal{M})$ and $\mathfrak{r}_0=$ non$(\mathcal{M})$. We also show that $\mathfrak{dd}_{\{r\}, \textsf{all}}=\mathfrak{dd}_{\{1/2\}, \textsf{all}}$ for all $r\in (0,1)$, and we provide a proof of Con($\mathfrak{dd}_{(0,1),\{0,1\}}^{\textsf{rel}}<$ non$(\mathcal{N})$) and Con($\mathfrak{dd}_{\textsf{all},\textsf{all}}^{\textsf{rel}}<$ non$(\mathcal{N})$).

\hspace{2mm}

\noindent \textit{Key words and phrases:} cardinal characteristics of the continuum, asymptotic density, rearrangement, splitting number, reaping number, density number

\hspace{1mm}

\noindent \textit{2020 Mathematics Subject Classification:} 03E17, 03E35

\end{abstract}

\section{Introduction}

Given a subset $A$ of $\omega$, the \textit{lower density} of $A$ is 
\[
\underline{d}(A):= \liminf_{n \to \infty} \frac{|A\cap n|}{n},
\]
and the \textit{upper density} of $A$ is 
\[
\overline{d}(A):= \limsup_{n \to \infty} \frac{|A\cap n|}{n}
\]
We say that $A$ has density $r\in[0,1]$ if $d(A):=\lim_{n \to \infty} \frac{|A \cap n|}{n}=r$. Lately, many cardinal invariants have been defined using this notion of asymptotic density, particularly to create variants of the splitting or reaping number (see \cite{Brendle_2023}), or to answer the question of how
many permutations are necessary to alter the density of infinite-coinfinite sets (see \cite{brech2024densitycardinals}). In this paper, we provide several new results about these cardinals and its relations with known cardinal characteristics. First, let us recall main definitions. Say $d(A)=$ \textsf{osc} if $\underline{d}(A)< \overline{d}(A)$. Define \textsf{all}:= $[0,1]\cup \{\textsf{osc}\}$. 

\begin{defn}\label{density cardinals}
Assume $X,Y \subseteq$ \textsf{all} are such that $X\neq \emptyset$ and for all $x\in X$ there is $y \in Y$ with $x\neq y$. The \textit{$(X,Y)$-density number} $\mathfrak{dd}_{X,Y}$ is the smallest cardinality of a family $\Pi\subseteq$ Sym$(\omega)$ such that for every infinite-coinfinite set $A\subseteq \omega$ with $d(A)\in X$ there is $\pi \in \Pi$ such that $d(\pi[A])\in Y$ and $d(\pi[A])\neq d(A)$. 
\end{defn}

In \cite{brech2024densitycardinals}, they give many upper and lower bounds for these cardinals and prove that $\mathfrak{dd}_{[0,1],\textsf{all}}=$ non$(\mathcal{M})$, where non$(\mathcal{M})$ stands for the uniformity of the ideal of the meager sets. However, it is still an open problem if one cardinal of the form $\mathfrak{dd}_{X,Y}$ is consistently different from a known cardinal characteristic. We found that there is a relation with other cardinals defined using relative density. 

\begin{defn}
Let $A$ and $B$ be subsets of $\omega$. Define the \textit{lower relative density of $A$ in $B$}
\[
\underline{d}_B(A)= \liminf_{n\to\infty} \frac{|A\cap B \cap n|}{|B\cap n|}
\]
and the \textit{upper relative density of $A$ in $B$}
\[
\overline{d}_B(A)= \limsup_{n\to\infty} \frac{|A\cap B \cap n|}{|B\cap n|}
\]
If $\underline{d}_B(A)=\overline{d}_B(A)$, the common value $d_B(A)$ is the \textit{relative density of $A$ in $B$}. Say $d_B(A)=$ \textsf{osc} if $\underline{d}_B(A)<\overline{d}_B(A)$.
\end{defn}

\begin{defn}\label{Xreaping}
Let $X$ be a nonempty subset of \textsf{all}. The \textit{$X$-reaping number} $\mathfrak{r}_X$ is the smallest cardinality of a family $\mathcal{R} \subseteq [\omega]^\omega$ such that for all $r\in X$ and for all infinite-coinfinite subsets $S$ of $\omega$ there is $R\in \mathcal{R}$ such that $d_R(S)\neq r$. 
\end{defn}

In figure \ref{Main relations}, we present the most important relations between these cardinals established in this paper. Note that $\mathfrak{r}_{\{1/2\}}$ is the $1/2$-reaping number $\mathfrak{r}_{1/2}$ presented in \cite{Brendle_2023}, so to keep this notation, we just write $\mathfrak{r}_{\rho}$ when $X=\{\rho\}$. In any case, Farkas, Klausner and Lischka prove $\mathfrak{r}_\rho=\mathfrak{r}_{1/2}$ for all $\rho\in(0,1)$ (see \cite[Theorem 5.3]{FARKAS_KLAUSNER_LISCHKA_2023}). There is a similar behavior with the density numbers. In Section \ref{section 1}, we prove $\mathfrak{dd}_{\{1/2\},\textsf{all}}=\mathfrak{dd}_{\{r\},\textsf{all}}$ for all $r\in(0,1)$ giving an answer to \cite[Conjecture 40]{brech2024densitycardinals}.

\begin{figure}[h!]
    \centering

\begin{tikzpicture}

\node (COVN) at (0,0) {cov($\mathcal{N}$)};
\node (QO) at (0,1) {$\mathfrak{dd}_{\{1/2\},\textsf{all}}$};
\node (OPO) at (-1,2) {$\mathfrak{dd}_{(0,1),\textsf{all}}$};
\node (RI) at (0,3) {$\mathfrak{r}_{(0,1)}$};
\node (COVSN) at (-1,4) {cov$(\mathcal{SN})$};
\node (OPC) at (1,4) {$\mathfrak{dd}_{(0,1),[0,1]}$};
\node (OPOP) at (1,5) {$\mathfrak{dd}_{(0,1),(0,1)}$};
\node (R) at (2,6) {$\mathfrak{r}$};
\node (OA) at (3,4) {$\mathfrak{dd}_{\{\textsf{osc}\}, \textsf{all}}$};
\node (FR) at (3,3) {$\mathfrak{fr}=\min\{\mathfrak{r},\mathfrak{d}\}$};
\node (RQ) at (1,2) {$\mathfrak{r}_{1/2}$};
\node (RO) at (3,5) {$\mathfrak{r}_{\textsf{osc}}$};

\draw[dashed]  (COVN) -- (QO) ;
\draw[dashed] (QO) -- (OPO);
\draw (RI) -- (COVSN);
\draw (RI) -- (OPC);
\draw[dashed] (OPC) -- (OPOP);
\draw[dashed] (OPOP) -- (R);
\draw[dashed] (OA) -- (RO);
\draw (FR) -- (OA);
\draw[dashed] (OPO) -- (RI);
\draw[dashed] (RQ) -- (RI);
\draw[dashed] (QO) -- (RQ);
\draw[dashed] (RO) -- (R);

\end{tikzpicture}
    
    \caption{ Cardinals grow as one moves up. Straight lines signify that strict inequality is consistent, and dotted lines mean that it is unknown whether the ordering between the cardinals is consistently strict.}
    \label{Main relations}
\end{figure}
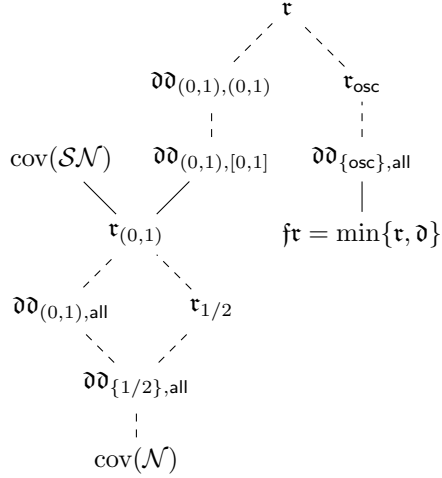

There are dual versions of the cardinals defined in \ref{Xreaping} and \ref{density cardinals}.

\begin{defn}
Let $X$ be a nonempty subset of \textsf{all}. The $X$-splitting number $\mathfrak{s}_X$ is the smallest cardinality of a family $\mathcal{S}$ of infinite-coinfinite subsets of $\omega$ such that for all $R\in[\omega]^\omega$ there exist $S\in \mathcal{S}$ and $r\in X$ such that $d_R(S)=r$. 
\end{defn}

\begin{defn}
Assume $X,Y \subseteq$ \textsf{all} are such that $X\neq \emptyset$ and for all $x\in X$ there is $y \in Y$ with $x\neq y$.  $\mathfrak{dd}_{X,Y}^\perp$ is the smallest cardinality of a family $\mathcal{A}$ of infinite-coinfinite subsets of $\omega$ with $d(A)\in X$ for all $A\in \mathcal{A}$ such that for all $\pi\in$ Sym$(\omega)$ there is $A\in \mathcal{A}$ such that $d(\pi[A])\not\in Y$ or $d(\pi[A])=d(A)$. 
\end{defn}

Some of the results presented in this paper also have a dual version. However, we do not know if $\mathfrak{dd}_{\{r\},\textsf{all}}^\perp= \mathfrak{dd}_{\{1/2\},\textsf{all}}^\perp$ for all $r\in(0,1)$. In Section \ref{section 1}, we also prove con$(\mathcal{N})\leq \mathfrak{r}_\rho$ and $\mathfrak{s}_\rho\leq$ non$(\mathcal{N})$ for all $\rho\in[0,1]$, and we show that $\mathfrak{s}_0=$ cov$(\mathcal{M})$ and $\mathfrak{r}_0=$ non$(\mathcal{M})$. These results answer \cite[Question E]{FARKAS_KLAUSNER_LISCHKA_2023} and \cite[Question F]{FARKAS_KLAUSNER_LISCHKA_2023}. At last, we study a variant of \ref{density cardinals}, which is also introduced in \cite{brech2024densitycardinals}, defined using the notion of relative density instead of asymptotic density.  

\begin{defn}
Assume $X,Y\subseteq$ all are such that $X\neq \emptyset$ and for all $x\in X$ there is $y\in Y$ with $y\neq x$. The $(X,Y)$-relative density number $\mathfrak{dd}_{X,Y}^{\textsf{rel}}$ is the smallest cardinality of a family $\Pi \subseteq$ Sym$(\omega)$ such that for every $A\subseteq B \subseteq \omega$ with $A$ and $B\setminus A$ both infinite and with $d_B(A) \in X$ there is $\pi \in \Pi$ such that $d_{\pi[B]}(\pi[A])\in Y$ and $d_B(A)\neq d_{\pi[B]}(\pi[A])$.
\end{defn}

It is not known whether there are upper bounds for the numbers $\mathfrak{dd}_{(0,1),\{0,1\}}^{\textsf{rel}}$, $\mathfrak{dd}_{(0,1),(0,1)}^{\textsf{rel}}$, and $\mathfrak{dd}_{\textsf{all},\textsf{all}}^{\textsf{rel}}$ aside from the continuum. In Section \ref{section 2}, we prove that $\mathfrak{dd}_{(0,1),\{0,1\}}^{\textsf{rel}}$ and $\mathfrak{dd}_{\textsf{all},\textsf{all}}^{\textsf{rel}}$ are consistently different from the continuum. Specifically, we show the consistency of $\mathfrak{dd}_{(0,1)\{0,1\}}^{\textsf{rel}}<$ non$(\mathcal{N})$ and $\mathfrak{dd}_{\textsf{all},\textsf{all}}^{\textsf{rel}}<$ non$(\mathcal{N})$. This is related to \cite[Question 51]{brech2024densitycardinals} because there is an analogy between $\mathfrak{dd}_{(0,1),\{0,1\}}^{\textsf{rel}}$ and the rearrangement number $\mathfrak{rr}_i$ from \cite{blass2019rearrangementnumber}.

\section{Permutations and relative density}\label{section 1}

Let us see the relation between $\mathfrak{r}_X$ and $\mathfrak{dd}_{X,\textsf{all}}$. Before, we need one more definition.

\begin{defn}
Let $\pi $ be an permutation of $\omega$ and $A$ be a subset of $\omega$. We say that $\pi$ \textit{preserves} $A$ if for all but finitely many elements of $A$ we have $x<y$ if and only if $\pi(x)<\pi(y)$.
\end{defn}

\begin{lemma}\label{permutations-infinite sets}
Let $A$ be an infinite subset of $\omega$. There exists a permutation $\pi \in$ sym$(\omega)$ such that for all $B\in[\omega]^\omega$ we have
\[
\text{if } d(\pi[B])=d(B) \text{ then } d_A(B)=d(B).
\]
\end{lemma}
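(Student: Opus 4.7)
The plan is to exhibit a single permutation $\pi$ with the stronger property that $d(\pi[B])=d_A(B)$ for every $B\subseteq\omega$, interpreting both sides as elements of $[0,1]\cup\{\textsf{osc}\}$. The lemma is then immediate: assuming $d(\pi[B])=d(B)$, this identity forces $d_A(B)=d(B)$.

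If $A$ is cofinite, $A$ differs from $\omega$ by finitely many points and an elementary calculation yields $d_A(B)=d(B)$ for every $B$, so the identity permutation suffices. Assume from now on that both $A$ and $A^c$ are infinite, with increasing enumerations $A=\{a_n\}_{n\in\omega}$ and $A^c=\{c_n\}_{n\in\omega}$. The guiding idea is to move $A$ onto a set of density one and push $A^c$ into a set of density zero: I would fix any infinite, coinfinite $T\subseteq\omega$ with $d(T)=0$ (e.g., $T=\{2^n:n\in\omega\}$), set $S:=\omega\setminus T$, enumerate $S=\{\sigma_n\}_{n\in\omega}$ and $T=\{\tau_n\}_{n\in\omega}$ in increasing order, and define the permutation $\pi(a_n):=\sigma_n$, $\pi(c_n):=\tau_n$.

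To verify the identity, I would decompose $\pi[B]=\pi[B\cap A]\cup\pi[B\cap A^c]$. Since $\pi[B\cap A^c]\subseteq T$ and $|T\cap[0,N)|=o(N)$, the $A^c$-piece contributes only $o(N)$ to $|\pi[B]\cap[0,N)|$. For the $A$-piece, setting $\mu(N):=|S\cap[0,N)|$, the condition $\sigma_n<N$ is equivalent to $n<\mu(N)$, so
\[
|\pi[B\cap A]\cap[0,N)|=|\{n<\mu(N):a_n\in B\}|=|B\cap A\cap[0,a_{\mu(N)})|.
\]
Using $|A\cap[0,a_{\mu(N)})|=\mu(N)$ and $\mu(N)/N\to 1$, dividing by $N$ gives
\[
\frac{|\pi[B]\cap[0,N)|}{N}=\frac{|B\cap A\cap[0,a_{\mu(N)})|}{|A\cap[0,a_{\mu(N)})|}\cdot\frac{\mu(N)}{N}+o(1),
\]
so the $\liminf$ and $\limsup$ in $N$ agree with those of $|B\cap A\cap[0,a_M)|/|A\cap[0,a_M)|$ in $M$.

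The main delicate point is the \textsf{osc} case. If $\underline{d}_A(B)<\overline{d}_A(B)$, I need this oscillation to transfer to the absolute density of $\pi[B]$; this uses the fact that $\mu(N+1)-\mu(N)\in\{0,1\}$, so $\mu$ is surjective onto $\omega$ and the two sequences share the same $\liminf$ and $\limsup$. Together with the $o(1)$ error from the $A^c$-part, this yields $d(\pi[B])=d_A(B)$ in every case, completing the argument.
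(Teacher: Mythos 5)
Your proposal is correct and follows essentially the same route as the paper: the paper also takes a permutation that maps $A$ order-preservingly onto a set of density one (hence $A^{c}$ onto a density-zero set) and deduces $d(\pi[B])=d_{\pi[A]}(\pi[B])=d_A(B)$, which is exactly the transfer identity you establish. Your version merely makes the permutation explicit (via the set of powers of $2$) and verifies the identity, including the oscillation case, by a direct computation with the counting function $\mu$, which is a fine, slightly more detailed rendering of the paper's argument.
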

\begin{proof}
Given an infinite set $A$, consider a permutation $\pi\in$ sym$(\omega)$ such that $d(\pi[A])=1$ and $\pi$ preserves $A$. Let $B\in [\omega]^\omega$ be such that $d(\pi[B])=d(B)$. Note that $d(\pi[B])=d_{\pi[A]}(\pi[B])$ since $d(\pi[\omega\setminus A])=0$. Besides, since $\pi$ preserves $A$, we have $d_{\pi[A]}(\pi[B])=d_{A}(B)$. Therefore, $d_{A}(B)=d_{\pi[A]}(\pi[B])=d(\pi[B])=d(B)$.
\end{proof}

\begin{cor}\label{bisecting-permutation}
For all nonempty $X\subseteq$ \textsf{all} we have $\mathfrak{dd}_{X,\textsf{all}}\leq \mathfrak{r}_{X}$ and $\mathfrak{s}_{X}\leq\mathfrak{dd}_{X,\textsf{all}}^{\perp}$. 
\end{cor}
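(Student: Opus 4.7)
The plan is to convert the reaping/splitting families into permutation families (and vice versa) by attaching to each $R$ in a reaping family (resp.\ each probe $R$ against the dual family) the permutation $\pi_R$ produced by Lemma \ref{permutations-infinite sets}. The Lemma is designed so that ``$\pi_R$ preserves the density of $B$'' forces ``$R$ samples $B$ at the correct density'', which exactly couples the density-changing problem for $\pi$ with the density-distinguishing problem for $R$.

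For the first inequality $\mathfrak{dd}_{X,\textsf{all}} \leq \mathfrak{r}_X$, let $\mathcal{R}$ witness $\mathfrak{r}_X$. For each $R \in \mathcal{R}$ choose $\pi_R$ as in Lemma \ref{permutations-infinite sets}, and set $\Pi = \{\pi_R : R \in \mathcal{R}\}$, so $|\Pi| \leq \mathfrak{r}_X$. Now take any infinite-coinfinite $A$ with $d(A) \in X$. Applying the reaping property with $r := d(A) \in X$ yields some $R \in \mathcal{R}$ with $d_R(A) \neq d(A)$. The contrapositive of the conclusion of Lemma \ref{permutations-infinite sets} for $\pi_R$ then gives $d(\pi_R[A]) \neq d(A)$, and since $d(\pi_R[A]) \in \textsf{all}$ trivially, $\pi_R$ is the desired witness for $A$. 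Thus $\Pi$ witnesses $\mathfrak{dd}_{X,\textsf{all}}$.

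For the dual inequality $\mathfrak{s}_X \leq \mathfrak{dd}_{X,\textsf{all}}^{\perp}$, let $\mathcal{A}$ witness $\mathfrak{dd}_{X,\textsf{all}}^{\perp}$; I claim $\mathcal{S} := \mathcal{A}$ itself witnesses $\mathfrak{s}_X$. Given any $R \in [\omega]^\omega$, apply Lemma \ref{permutations-infinite sets} to $R$ to obtain $\pi_R$. The defining property of $\mathcal{A}$ applied to $\pi_R$ furnishes $A \in \mathcal{A}$ with either $d(\pi_R[A]) \notin \textsf{all}$ or $d(\pi_R[A]) = d(A)$; the first alternative is vacuous because $\textsf{all}$ contains every possible density value, so $d(\pi_R[A]) = d(A)$. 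The Lemma then gives $d_R(A) = d(A)$, and since $d(A) \in X$ by the definition of $\mathcal{A}$, we have found $S = A \in \mathcal{S}$ and $r = d(A) \in X$ with $d_R(S) = r$, as required.

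There is no real obstacle here; the only subtle point is recognizing that when $Y = \textsf{all}$, the disjunct ``$d(\pi[A]) \notin Y$'' in the definition of $\mathfrak{dd}_{X,Y}^{\perp}$ becomes vacuous, which is exactly what collapses the dual statement into a direct application of the contrapositive-free direction of Lemma \ref{permutations-infinite sets}.
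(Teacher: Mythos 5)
Your proof is correct and follows essentially the same route as the paper: attach the permutation from Lemma \ref{permutations-infinite sets} to each member of the reaping family (resp.\ to the probe set $R$), and use that $Y=\textsf{all}$ makes the extra disjunct vacuous; the only cosmetic difference is that you phrase the first inequality via the contrapositive where the paper argues by contradiction.
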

\begin{proof}
Let $\mathcal{R}_X$ be a family of subsets of $\omega$ witness of $\mathfrak{r}_X$. For every $A\in \mathcal{R}_X$ consider the permutation $\pi_A$ given by \ref{permutations-infinite sets}, and define $\Pi_X:=\{\pi_A \,:\, A\in\mathcal{R}_X\}$. If there exists an infinite-coinfinite set $B\subseteq\omega$ with $d(B)=r\in X$ such that $d(\pi_A[B])=d(B)$ for all $A\in \mathcal{R}_X$, then, by \ref{permutations-infinite sets},  $d_A(B)=r \in X$ for all $A\in \mathcal{R}_X$, which is a contradiction. Therefore, $\mathfrak{dd}_{X,\textsf{all}}\leq |\Pi_X|\leq\mathfrak{r}_{X}$. Now, to see the other inequality, let $\mathcal{S}_X$ be a family of infinite-coinfinite subsets of $\omega$ witness of $\mathfrak{dd}_{X,\textsf{all}}^{\perp}$. Given an infinite set $A\subseteq\omega$, consider the permutation $\pi_A$ given by \ref{permutations-infinite sets}, then there is $B\in \mathcal{S}_X$ with $d(B)=r\in X$ such that $d(\pi_A[B])=d(B)$. This implies that $d_A(B)=r\in X$. Therefore, $\mathfrak{s}_{X}\leq|\mathcal{S}_X|\leq\mathfrak{dd}_{X,\textsf{all}}^{\perp}$.
\end{proof}

Thanks to the previous result, we can say more about $\mathfrak{s}_X$ and $\mathfrak{r}_X$ since density cardinals satisfy the following inequalities. 

\begin{lemma}\label{equality with non and 0-1}
If $X\cap[0,1]\neq\emptyset$, then $\mathfrak{dd}_{X,\textsf{all}}^\perp\leq$ non$(\mathcal{N})$ and cov$(\mathcal{N})\leq \mathfrak{dd}_{X,\textsf{all}}$. Besides, if \textsf{osc} $\not\in X$, and $0\in X$ or $1\in X$, then non$(\mathcal{M})= \mathfrak{dd}_{X,\textsf{all}}$ and $\mathfrak{dd}_{X,\textsf{all}}^\perp =$ cov$(\mathcal{M})$. 
\end{lemma}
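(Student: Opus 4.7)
The lemma splits naturally by which $\sigma$-ideal is involved: inequalities with $\mathcal{N}$ come from a probabilistic (measure-theoretic) argument, while the equalities with $\mathcal{M}$ combine a monotonicity bound with a harder direction using a combinatorial characterization of non$(\mathcal{M})$.

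\emph{Null-ideal part.} Fix any $r\in X\cap[0,1]$. The plan is to construct a Borel probability measure $\mu$ on $2^\omega$ under which $\mu$-a.e.\ $A$ simultaneously satisfies: (i) $A$ is infinite-coinfinite, (ii) $d(A)=r$, and (iii) for every fixed $\pi\in$ Sym$(\omega)$, $d(\pi[A])=r$. For $r\in(0,1)$ the Bernoulli$(r)$ product measure $\mu_r$ works, since (iii) is the strong law of large numbers applied to the independent sequence $\chi_A(\pi^{-1}(0)),\chi_A(\pi^{-1}(1)),\ldots$ of Bernoulli$(r)$ variables. For $r=0$ (and via complementation $r=1$) take a product measure with decreasing parameters $p_n\downarrow 0$, $\sum p_n=\infty$, and $\sum_{k<n}p_k=o(n)$, for instance $p_n=1/\sqrt{n+1}$; then (i)--(ii) follow from Borel--Cantelli and Kolmogorov's SLLN, and (iii) because $\sum_{k<n}p_{\pi^{-1}(k)}\leq\sum_{k<n}p_k=o(n)$ by the rearrangement inequality for the decreasing sequence $(p_n)$. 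Since $\mu$ is atomless, its null ideal is Borel-isomorphic to $\mathcal{N}$ and has the same covering and non-numbers. Consequently cov$(\mathcal{N})\leq\mathfrak{dd}_{X,\textsf{all}}$: any $\Pi$ with $|\Pi|<$ cov$(\mathcal{N})$ fails to cover $2^\omega$ by the $\mu$-null sets $\{A:d(\pi[A])\neq r\}$, leaving some $A$ satisfying (i)--(ii) that falsifies the witness condition. Dually $\mathfrak{dd}_{X,\textsf{all}}^\perp\leq$ non$(\mathcal{N})$: intersect any non-$\mu$-null set of size non$(\mathcal{N})$ with the conull set where (i)--(ii) hold, and use that for any $\pi$ the set $\{A:d(\pi[A])\neq r\}$ is null.

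\emph{Meager-ideal part, easy direction.} Assume now osc $\notin X$ and $0\in X$ or $1\in X$; then $X\subseteq[0,1]$. Monotonicity of $\mathfrak{dd}$ in the first coordinate combined with the cited equality $\mathfrak{dd}_{[0,1],\textsf{all}}=$ non$(\mathcal{M})$ immediately gives $\mathfrak{dd}_{X,\textsf{all}}\leq$ non$(\mathcal{M})$, and dually $\mathfrak{dd}_{X,\textsf{all}}^\perp\geq$ cov$(\mathcal{M})$.

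\emph{Main obstacle.} The reverse bounds non$(\mathcal{M})\leq\mathfrak{dd}_{X,\textsf{all}}$ and $\mathfrak{dd}_{X,\textsf{all}}^\perp\leq$ cov$(\mathcal{M})$ are the crux, since monotonicity from the $[0,1]$-result goes the wrong way. By further monotonicity (and complementation if only $1\in X$) I would reduce to the case $X=\{0\}$, and then invoke Bartoszyński's characterization of non$(\mathcal{M})$ as the minimum cardinality of an infinitely-often-equal family in $\omega^\omega$. The coding step is to send each $g\in\omega^\omega$ to a density-$0$ set $A_g$ by placing ones at $g$-controlled positions inside a sparse sequence of widely spaced blocks, designed so that for every $\pi\in$ Sym$(\omega)$ a canonically associated $f_\pi\in\omega^\omega$ read off from $\pi$ must satisfy $f_\pi(n)=g(n)$ for infinitely many $n$ whenever $d(\pi[A_g])\neq 0$. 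A witness $\Pi$ of $\mathfrak{dd}_{\{0\},\textsf{all}}$ then yields an infinitely-often-equal family $\{f_\pi:\pi\in\Pi\}$, forcing $|\Pi|\geq$ non$(\mathcal{M})$; the cov$(\mathcal{M})$ bound follows symmetrically via the Bartoszyński dual (eventually-different families). The delicate step is to design $g\mapsto A_g$ so that the sparsity is robust enough that $d(A_g)=0$ really holds for every $g$, while the block structure is sensitive enough that any permutation shifting density away from $0$ must replicate $g$ on an infinite set; this is where I expect the bookkeeping to be heaviest.
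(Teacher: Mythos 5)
For the record, the paper does not reprove this lemma at all: its ``proof'' is a citation of \cite[Theorems 16 and 18]{brech2024densitycardinals}. Your null-ideal half is a correct, self-contained argument and essentially reconstructs the standard random-real/measure proof: for $r\in(0,1)$ the Bernoulli$(r)$ measure, and for $r\in\{0,1\}$ a product measure with parameters $p_n\downarrow 0$, $\sum p_n=\infty$, $\sum_{k<n}p_k=o(n)$, do give that for each fixed $\pi$ the set $\{A: d(\pi[A])\neq r\}$ is null (the rearrangement bound $\sum_{k<n}p_{\pi^{-1}(k)}\leq\sum_{k<n}p_k$ is exactly the right observation), and the transfer through the measure-isomorphism to $\mathcal{N}$ is legitimate. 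So $\mathrm{cov}(\mathcal{N})\leq\mathfrak{dd}_{X,\textsf{all}}$ and $\mathfrak{dd}^\perp_{X,\textsf{all}}\leq\mathrm{non}(\mathcal{N})$ are fine as you argue them.

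The meager half, however, has a genuine gap. The inequalities $\mathrm{non}(\mathcal{M})\leq\mathfrak{dd}_{X,\textsf{all}}$ and $\mathfrak{dd}^\perp_{X,\textsf{all}}\leq\mathrm{cov}(\mathcal{M})$ (after your legitimate reduction to $X=\{0\}$) are exactly the content of \cite[Theorem 18]{brech2024densitycardinals}, and your text never actually produces the coding it promises: the block lengths, the map $g\mapsto A_g$, the function(s) $f_\pi$ read off from $\pi$, and above all the proof that $\overline{d}(\pi[A_g])>0$ forces $f_\pi(n)=g(n)$ infinitely often are all left unspecified. This last implication is not mere bookkeeping: if $A_g$ carries one coded point per block, then $|A_g\cap\pi^{-1}[m]|\geq\varepsilon m$ only tells you that $g(j)$ lands in the trace set $\{t: b_j+t\in\pi^{-1}[m]\}$ for many $j$, and turning this into an infinitely-often-equal (or appropriately bounded infinitely-often-slalom) family of size $|\Pi|$, uniformly in the unknown $\varepsilon$, is the heart of the theorem you are trying to prove. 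As written, the hard direction is a plan, not a proof; either carry out that construction in detail or do what the paper does and cite \cite{brech2024densitycardinals}. A smaller point: your ``dually $\mathfrak{dd}^\perp_{X,\textsf{all}}\geq\mathrm{cov}(\mathcal{M})$'' does not formally follow from the equality $\mathfrak{dd}_{[0,1],\textsf{all}}=\mathrm{non}(\mathcal{M})$; you should either invoke the dual statement of the cited theorem or argue directly that for each fixed infinite-coinfinite $A$ the set $\{\pi\in\mathrm{Sym}(\omega): d(\pi[A])=d(A)\}$ is meager (since the oscillating $\pi$ form a comeager set), so the meager sets attached to a witness family must cover $\mathrm{Sym}(\omega)$.
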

\begin{proof}
See \cite[Theorem 16]{brech2024densitycardinals} and \cite[Theorem 18]{brech2024densitycardinals}.    
\end{proof}

\begin{cor}\label{density and nulls}
If $X\cap[0,1]\neq\emptyset$, then cov$(\mathcal{N})\leq \mathfrak{r}_X$ and $\mathfrak{s}_X \leq$ non$(\mathcal{N})$. 
\end{cor}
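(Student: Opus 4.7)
The statement is an immediate corollary of combining the two preceding results in the excerpt, so the plan is simply to chain the inequalities together and confirm the hypotheses transfer cleanly.

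My approach is to observe that Corollary \ref{bisecting-permutation} gives, for any nonempty $X \subseteq \textsf{all}$, the two inequalities
\[
\mathfrak{dd}_{X,\textsf{all}} \leq \mathfrak{r}_X \qquad \text{and} \qquad \mathfrak{s}_X \leq \mathfrak{dd}_{X,\textsf{all}}^\perp.
\]
Under the extra hypothesis $X \cap [0,1] \neq \emptyset$, Lemma \ref{equality with non and 0-1} provides the complementary bounds
\[
\text{cov}(\mathcal{N}) \leq \mathfrak{dd}_{X,\textsf{all}} \qquad \text{and} \qquad \mathfrak{dd}_{X,\textsf{all}}^\perp \leq \text{non}(\mathcal{N}).
\]
Concatenating each pair yields $\text{cov}(\mathcal{N}) \leq \mathfrak{dd}_{X,\textsf{all}} \leq \mathfrak{r}_X$ and $\mathfrak{s}_X \leq \mathfrak{dd}_{X,\textsf{all}}^\perp \leq \text{non}(\mathcal{N})$, which is exactly the claim.

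There is no genuine obstacle here; the only thing to verify is that the hypothesis $X \cap [0,1] \neq \emptyset$ is precisely what Lemma \ref{equality with non and 0-1} requires, while Corollary \ref{bisecting-permutation} is stated with no restriction on $X$ beyond being nonempty (which is implied). So the proof can be written as a one-line composition of the two cited results.
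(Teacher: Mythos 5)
Your proof is correct and is exactly the argument the paper intends: the corollary is stated without proof precisely because it follows by chaining Corollary \ref{bisecting-permutation} with Lemma \ref{equality with non and 0-1}, and you correctly note that $X\cap[0,1]\neq\emptyset$ supplies both the nonemptiness needed for the former and the hypothesis of the latter. Nothing further is needed.
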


Corollary \ref{density and nulls} answers the question \cite[Question E]{FARKAS_KLAUSNER_LISCHKA_2023} since, in particular, for all $\rho\in[0,1]$ we have cov$(\mathcal{N})\leq \mathfrak{r}_\rho$ and $\mathfrak{s}_\rho \leq$ non$(\mathcal{N})$. In this paper, they prove that $\mathfrak{r}_\rho\leq$ non$(\mathcal{M})$ and cov$(\mathcal{M}) \leq \mathfrak{s}_\rho$ for all $\rho\in[0,1]$. There is a more general result.

\begin{lemma}\label{inqualitites s-cov}
If $X\subseteq[0,1]$ and $X\neq \emptyset$, then $\mathfrak{r}_X\leq$ non$(\mathcal{M})$ and cov$(\mathcal{M}) \leq \mathfrak{s}_X$. 
\end{lemma}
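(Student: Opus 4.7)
The plan is to reduce both inequalities to the extremal case $X=[0,1]$ by monotonicity and then derive them from a single Baire-category fact about relative density. First observe that $\mathfrak{r}_X$ is monotone non-decreasing and $\mathfrak{s}_X$ is monotone non-increasing in $X$: any $X'$-reaping family with $X\subseteq X'$ is automatically $X$-reaping (fewer values to avoid), and any $X$-splitting family is $X'$-splitting. Hence it suffices to show $\mathfrak{r}_{[0,1]}\leq$ non$(\mathcal{M})$ and cov$(\mathcal{M})\leq \mathfrak{s}_{[0,1]}$.

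The key lemma I would establish is: for every infinite-coinfinite $S\subseteq \omega$, the set $D_S=\{R\in 2^\omega : d_R(S)\text{ exists}\}$ is meager in $2^\omega$. To prove it, I would show that comeagerly many $R$ simultaneously satisfy $\overline{d}_R(S)=1$ and $\underline{d}_R(S)=0$, forcing $d_R(S)=$ \textsf{osc}. For fixed $\epsilon\in\mathbb{Q}\cap(0,1)$ and $N\in\omega$, the set
\[
U_{\epsilon,N}^{\uparrow}=\{R\in 2^\omega : \exists n>N,\ |R\cap n|>0 \text{ and } |R\cap S\cap n|/|R\cap n|>1-\epsilon\}
\]
is open, and is dense by a standard extension trick: any basic clopen neighbourhood determined by $\sigma\in 2^{<\omega}$ can be prolonged by marking $M$ consecutive elements of $S$ above $|\sigma|$ with $1$'s and everything else in that interval with $0$'s; for $M$ large enough in terms of $|\sigma|$ and $\epsilon$, the ratio at the endpoint exceeds $1-\epsilon$. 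Intersecting over countably many pairs $(\epsilon,N)$ yields $\{R:\overline{d}_R(S)=1\}$ comeager, and the dual argument applied to $\omega\setminus S$ (infinite by coinfiniteness of $S$) gives $\{R:\underline{d}_R(S)=0\}$ comeager; the intersection of these two comeager sets is disjoint from $D_S$.

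With the lemma in hand both inequalities follow directly. For the reaping inequality, let $\mathcal{F}\subseteq 2^\omega$ be a non-meager family of cardinality non$(\mathcal{M})$; by intersecting with the comeager set $[\omega]^\omega$ assume $\mathcal{F}\subseteq[\omega]^\omega$. For every infinite-coinfinite $S$, meagerness of $D_S$ produces some $R\in\mathcal{F}\setminus D_S$ with $d_R(S)=$ \textsf{osc}, so $d_R(S)\neq r$ for every $r\in X\subseteq[0,1]$, whence $\mathcal{F}$ witnesses $\mathfrak{r}_X$. For the splitting inequality, suppose $\mathcal{S}$ is an $X$-splitting family with $|\mathcal{S}|<$ cov$(\mathcal{M})$; then $\bigcup_{S\in\mathcal{S}} D_S$ is a union of fewer than cov$(\mathcal{M})$ meager sets and cannot cover the comeager set $[\omega]^\omega$, so any $R$ in the resulting nonempty difference satisfies $d_R(S)=$ \textsf{osc} $\notin X$ for every $S\in\mathcal{S}$, contradicting splittingness.

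The main obstacle is the elementary estimate underlying density of $U_{\epsilon,N}^{\uparrow}$: one must choose the block length $M$ as a function of the number of $1$'s already in $\sigma$ and of $\epsilon$ so that the ratio at the endpoint of the extension genuinely crosses $1-\epsilon$. Once this is verified, the dual estimate for $\omega\setminus S$ and the rest of the deduction are routine Baire-category manipulations.
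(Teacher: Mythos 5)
Your proof is correct and takes essentially the same route as the paper: the key fact in both is that for every infinite-coinfinite $S$ the set of $R$ with $d_R(S)=\textsf{osc}$ is comeager (the paper cites Brendle's proof for the $\limsup$ half and applies it to $\omega\setminus S$, while you verify it directly with an open-dense argument), after which a non-meager family of size non$(\mathcal{M})$ and a union of fewer than cov$(\mathcal{M})$ meager sets yield the two inequalities exactly as in the paper. Your preliminary monotonicity reduction to $X=[0,1]$ is valid but unnecessary, since $\textsf{osc}$ already avoids every $X\subseteq[0,1]$ directly.
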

\begin{proof}
This is basically a consequence of the proof of cov$(\mathcal{M})\leq\mathfrak{s}_{1/2}$ presented in \cite{Brendle_2023}. They prove that for all infinite-coinfinite subsets $S$ of $\omega$ the set $\{R\in[\omega]^\omega \,|\, \limsup_{n \to \infty}\frac{|S\cap R \cap n|}{|R \cap n|}=1 \}$ is a comeagre set. Note that if $\limsup_{n \to \infty}\frac{|(\omega\setminus S) \cap R \cap n|}{|R \cap n|}=1$ then $\liminf_{n \to \infty}\frac{|S\cap R \cap n|}{|R \cap n|}=0$. Therefore, for all infinite-coinfinite subsets $S$ of $\omega$ the set $\{R\in[\omega]^\omega \,|\, \limsup_{n \to \infty}\frac{|S\cap R \cap n|}{|R \cap n|}=1\} \cap \{R\in[\omega]^\omega \,|\, \liminf_{n \to \infty}\frac{|S\cap R \cap n|}{|R \cap n|}=0 \}$ is a comeagre set. In conclusion, the set $\{R\in[\omega]^\omega \,|\, d_R(S)$ oscillates$\}$ is comeagre. For that reason, given a nonempty set $X\subseteq[0,1]$ and a non meager set $\mathcal{R}_X\subseteq[\omega]^\omega$, we have that for all infinite-coinfinite sets $S$ there is $R\in \mathcal{R}_X$ such that $d_R(S)=\textsf{osc}\not\in X$. Therefore $\mathfrak{r}_X\leq$ non$(\mathcal{M})$. By a similar argument, we have cov$(\mathcal{M}) \leq \mathfrak{s}_X$. 
\end{proof}

By \ref{bisecting-permutation}, \ref{equality with non and 0-1} and \ref{inqualitites s-cov}, we get the following corollary answering \cite[Question F]{FARKAS_KLAUSNER_LISCHKA_2023}.

\begin{cor}
For all $X\subseteq[0,1]$ such that $0 \in X$ or $1 \in X$, we have $\mathfrak{s}_X=$ cov$(\mathcal{M})$ and non$(\mathcal{M})=\mathfrak{r}_X$. In particular, $\mathfrak{s}_0=$ cov$(\mathcal{M})$ and non$(\mathcal{M})=\mathfrak{r}_0$.    
\end{cor}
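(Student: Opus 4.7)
The plan is to chain together the three lemmas immediately preceding the corollary, since each direction of each equality is already essentially done. First, I would check that the hypothesis is strong enough to invoke Lemma~\ref{equality with non and 0-1}: since $X\subseteq[0,1]$, the oscillation symbol $\textsf{osc}$ does not belong to $X$, and by assumption $0\in X$ or $1\in X$. Thus Lemma~\ref{equality with non and 0-1} gives
\[
\mathfrak{dd}_{X,\textsf{all}}=\mathrm{non}(\mathcal{M})\qquad\text{and}\qquad \mathfrak{dd}_{X,\textsf{all}}^{\perp}=\mathrm{cov}(\mathcal{M}).
\]

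Next, I would feed these equalities into Corollary~\ref{bisecting-permutation}, which yields $\mathfrak{dd}_{X,\textsf{all}}\leq\mathfrak{r}_X$ and $\mathfrak{s}_X\leq \mathfrak{dd}_{X,\textsf{all}}^{\perp}$. Substituting, this becomes $\mathrm{non}(\mathcal{M})\leq\mathfrak{r}_X$ and $\mathfrak{s}_X\leq\mathrm{cov}(\mathcal{M})$. The reverse inequalities come directly from Lemma~\ref{inqualitites s-cov}, which applies because $X\subseteq[0,1]$ is nonempty; it provides $\mathfrak{r}_X\leq\mathrm{non}(\mathcal{M})$ and $\mathrm{cov}(\mathcal{M})\leq\mathfrak{s}_X$. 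Combining the two pairs of inequalities gives $\mathfrak{r}_X=\mathrm{non}(\mathcal{M})$ and $\mathfrak{s}_X=\mathrm{cov}(\mathcal{M})$, and the special case $X=\{0\}$ yields $\mathfrak{r}_0=\mathrm{non}(\mathcal{M})$ and $\mathfrak{s}_0=\mathrm{cov}(\mathcal{M})$.

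There is no real obstacle: the corollary is purely a sandwich argument, squeezing $\mathfrak{r}_X$ and $\mathfrak{s}_X$ between the corresponding meager-ideal cardinals by using the density-number equalities as an intermediate step. The only thing to be careful about is verifying that the hypotheses $\textsf{osc}\notin X$ and $X\neq\emptyset$ are automatically met under the stated assumptions, which they are.
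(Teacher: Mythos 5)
Your proposal is correct and is exactly the paper's argument: the paper derives the corollary by combining Corollary~\ref{bisecting-permutation}, Lemma~\ref{equality with non and 0-1}, and Lemma~\ref{inqualitites s-cov} in precisely the sandwich fashion you describe, after noting (as you do) that $X\subseteq[0,1]$ with $0\in X$ or $1\in X$ guarantees the hypotheses of those results.
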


In \cite{FARKAS_KLAUSNER_LISCHKA_2023}, it was also proved that $\mathfrak{r}_{1/2}=\mathfrak{r}_{p}$ for all $p\in (0,1)$. Using an analogous idea, we present a similar result for the density numbers. Like them, we also use the following fact.

\begin{fact}[Non-integer bases]\label{nonintegerbases}
Let $0<b<1$ be a real number. Then every $x>0$ can
be written as $x=\sum_{n=-N}^{\infty}c_n b^n$  where $N\geq 0$ is an integer and $0 \leq c_n < 1/b$ are also
integers for all $n$.  
\end{fact}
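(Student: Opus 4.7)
The plan is to use the standard greedy algorithm, which is the usual tool for producing $\beta$-ary expansions for non-integer bases $\beta>1$. Setting $\beta:=1/b>1$ turns the problem into writing $x=\sum_{n=-N}^{\infty}c_n\beta^{-n}$ with integer digits $0\le c_n<\beta$, which is the familiar $\beta$-expansion question.

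First, I would fix the leading exponent: since $\beta>1$ implies $\beta^k\to\infty$, choose the smallest $N\ge 0$ such that $x<\beta^{N+1}$ (so $N=0$ handles the case $x<\beta$). Then I would iteratively extract digits by setting $r_0:=x$ and, for each $i\ge 0$,
\[
c_{-(N-i)}:=\lfloor r_i/\beta^{N-i}\rfloor,\qquad r_{i+1}:=r_i-c_{-(N-i)}\,\beta^{N-i}.
\]
A single induction then establishes the invariant $0\le r_i<\beta^{N-i+1}$: the base case $i=0$ is the choice of $N$, and the inductive step is just the defining property of the floor. The same invariant forces $c_{-(N-i)}=\lfloor r_i/\beta^{N-i}\rfloor<\beta=1/b$, while non-negativity of the digits is automatic.

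Finally, because $\beta>1$, the bound $r_i<\beta^{N-i+1}$ implies $r_i\to 0$, so the telescoping identity $\sum_{j=0}^{i-1}c_{-(N-j)}\beta^{N-j}=x-r_i$ converges to $x$; upon relabeling $n=i-N$ and using $\beta^{-n}=b^n$ this becomes $x=\sum_{n=-N}^{\infty}c_nb^n$, as required.

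There is no substantive obstacle in the argument; the only care is the bookkeeping of the indexing, so that the "large" terms $b^n$ with $n<0$ are peeled off first (playing the role of the digits to the left of the radix point) while the tail $\sum_{n\ge 0}c_nb^n$ converges automatically because $b^n\to 0$. Everything else is routine manipulation of the floor function.
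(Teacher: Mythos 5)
Your proof is correct: setting $\beta=1/b$ and running the greedy digit-extraction with the invariant $0\le r_i<\beta^{N-i+1}$ gives exactly the claimed expansion, with integer digits $0\le c_n<1/b$ and the telescoping sum converging to $x$. The paper states this fact without proof (it is the standard $\beta$-expansion result borrowed from the literature), so your argument is precisely the routine greedy proof the paper takes for granted, and it is compatible with how the fact is applied later (there $x<1/b<2$, so $N=0$ and the digits are $0$ or $1$).
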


\begin{theorem}\label{equalityparameters}
For all $r\in(0,1)$, we have $\mathfrak{dd}_{\{1/2\},\textsf{all}}=\mathfrak{dd}_{\{r\},\textsf{all}}$.
\end{theorem}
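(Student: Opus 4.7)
The plan is to adapt the proof of $\mathfrak{r}_\rho = \mathfrak{r}_{1/2}$ from Farkas, Klausner and Lischka \cite[Theorem 5.3]{FARKAS_KLAUSNER_LISCHKA_2023} to the density-number setting, establishing the two inequalities $\mathfrak{dd}_{\{r\},\textsf{all}} \leq \mathfrak{dd}_{\{1/2\},\textsf{all}}$ and $\mathfrak{dd}_{\{1/2\},\textsf{all}} \leq \mathfrak{dd}_{\{r\},\textsf{all}}$ symmetrically. I describe the first direction below; the second is obtained by swapping the roles of $r$ and $1/2$.

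Given a family $\Pi$ witnessing $\mathfrak{dd}_{\{1/2\},\textsf{all}}$, my aim is to produce a family $\Pi^{\ast}$ of the same cardinality witnessing $\mathfrak{dd}_{\{r\},\textsf{all}}$. The main tool is Fact \ref{nonintegerbases}: I would fix a base $b \in (0,1)$ and a bounded-digit expansion of a real built from $r$ (for instance $r$ itself or $r/(1-r)$), and use the digit sequence $(c_n)$ to partition $\omega$ into consecutive intervals $(I_n)_{n \geq 1}$ of controlled sizes. On this block structure I would construct a finite family of auxiliary permutations $\sigma_1, \ldots, \sigma_k$ with the translation property that, for every infinite-coinfinite $A$ with $d(A) = r$, at least one $\sigma_j$ satisfies $d(\sigma_j[A]) = 1/2$. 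Setting $\Pi^{\ast} := \{\sigma_j^{-1} \circ \pi \circ \sigma_j : \pi \in \Pi, \; 1 \leq j \leq k\}$ keeps $|\Pi^{\ast}| = |\Pi|$; given any $A$ with $d(A) = r$, I would pick $j$ with $d(\sigma_j[A]) = 1/2$, then $\pi \in \Pi$ with $d(\pi \sigma_j[A]) \neq 1/2$, and verify through a companion pullback property of $\sigma_j^{-1}$ that $d(\sigma_j^{-1} \pi \sigma_j[A]) \neq r$.

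The main obstacle will be the construction of the $\sigma_j$. A single fixed permutation cannot map every density-$r$ set to a density-$1/2$ set, since the outcome is sensitive to the exact arrangement of $A$ inside each block $I_n$ and not merely to its asymptotic density. The non-integer base expansion is what resolves this: it calibrates the block sizes so that finitely many permutations $\sigma_j$ (each implementing a different block-internal rearrangement dictated by the digits $c_n$) jointly cover every density-$r$ arrangement, while also forcing the inverse maps $\sigma_j^{-1}$ to translate images of density different from $1/2$ back to densities genuinely different from $r$. Verifying both the covering claim and the pullback invariance, and avoiding the accidental coincidence $d(\sigma_j^{-1} \pi \sigma_j[A]) = r$, is the technical heart of the argument, and is precisely where Fact \ref{nonintegerbases} is used, exactly as in the reaping-number case.
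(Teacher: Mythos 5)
Your plan breaks down at its central step: there is no finite family $\sigma_1,\dots,\sigma_k$ of permutations such that every infinite--coinfinite $A$ with $d(A)=r$ satisfies $d(\sigma_j[A])=1/2$ for some $j$. Such a family would in particular change the density of every density-$r$ set, i.e.\ it would witness $\mathfrak{dd}_{\{r\},\{1/2\}}\leq k$, and since enlarging the target set only shrinks these cardinals, also $\mathfrak{dd}_{\{r\},\textsf{all}}\leq k$; but $\mathfrak{dd}_{\{r\},\textsf{all}}\geq \mathrm{cov}(\mathcal{N})\geq\aleph_1$ by Lemma \ref{equality with non and 0-1}. So no finite (indeed no countable) family can have the covering property you need, and no calibration of block sizes via Fact \ref{nonintegerbases} can circumvent this cardinality obstruction. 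This is exactly where the analogy with $\mathfrak{r}_\rho=\mathfrak{r}_{1/2}$ fails: in the reaping argument the transformation is applied to the members of the witnessing family of \emph{sets}, which can be refined and recombined freely, whereas your $\sigma_j$ are fixed in advance, independently of $\Pi$ and of $A$, and would on their own have to solve the full density-changing problem that the cardinal $\mathfrak{dd}_{\{r\},\textsf{all}}$ measures. (Even granting the covering step, the ``pullback'' requirement that $d(\sigma_j^{-1}\pi\sigma_j[A])\neq r$ whenever $d(\pi\sigma_j[A])\neq 1/2$ is a second unsupported universal property of a single fixed permutation.)

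The paper avoids this by never transforming the permutations by fixed auxiliary maps; instead it argues contrapositively on the side of resistant sets, and the two inequalities are not symmetric. For $\mathfrak{dd}_{\{1/2\},\textsf{all}}\leq\mathfrak{dd}_{\{r\},\textsf{all}}$ one takes $\Pi$ with $|\Pi|<\mathfrak{dd}_{\{1/2\},\textsf{all}}$, finds $A$ with $d(A)=d(\pi[A])=1/2$ for all $\pi\in\Pi$, and conjugates $\Pi$ by the increasing enumerations of $\omega\setminus A$ and of $\pi[\omega\setminus A]$ --- maps that \emph{depend on the already-found resistant set}, which is harmless since only the cardinality of the conjugated family matters --- to get, recursively, pairwise disjoint sets $A_i$ with $d(A_i)=d(\pi[A_i])=2^{-i}$; summing along the binary expansion of $r$ yields a resistant set of density $r$. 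The converse direction runs the same scheme with densities $r(1-r)^i$ and uses Fact \ref{nonintegerbases} to expand $\tfrac{1}{2r}$ in base $\tfrac{1}{1-r}$, which produces $0$--$1$ digits only for $r\in(1/3,1/2)$; for $0<r\leq 1/3$ an additional iteration of $r\mapsto 2r-r^2$ is needed to climb into $(1/3,2/3)$. Your symmetric ``swap $r$ and $1/2$'' treatment would miss both of these extra ingredients even if the covering step could be repaired.
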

\begin{proof}
\begin{description}
    \item[$\mathfrak{dd}_{\{1/2\},\textsf{all}} \leq \mathfrak{dd}_{\{r\},\textsf{all}}$ for all $r \in(0,1)$.] Let $\Pi \subseteq$ sym$(\omega)$ be a family of permutations such that $|\Pi|< \mathfrak{dd}_{\{1/2\},\textsf{all}}$, then there is a set $A\subseteq \omega$ such that $d(A)=1/2$ and $d(\pi[A])=1/2$ for all $\pi \in \Pi$. Consider the increasing enumeration $f: \omega \rightarrow \omega\setminus A$ of $\omega\setminus A$, and for all $\pi\in\Pi$ consider the increasing enumeration $f_\pi: \omega \rightarrow \pi[\omega\setminus A]$ of $\pi[\omega \setminus A]$. Now, define the following family of permutations $\Pi^\prime:= \bigcup_{\pi \in \Pi} \{ f_\pi^{-1} \circ \pi \circ f \}$. Since $|\Pi^\prime|\leq |\Pi| < \mathfrak{dd}_{\{1/2\},\textsf{all}}$, there is a subset $B^\prime$ of $\omega$ such that $d(B^\prime)=1/2$ and $d(\pi^\prime[B^\prime])=1/2$ for all $\pi^\prime \in \Pi^\prime$. Define $B:=f[B^\prime]$. It is easy to check that $d(B)=1/4$ and $d(\pi[B])=1/4$ for all $\pi \in \Pi$. Applying the same argument in $\omega\setminus(A \cup B)$, we can also get a set $C \subseteq \omega\setminus(A \cup B)$ such that $d(C)=1/8$ and $d(\pi[C])=1/8$ for all $\pi \in \Pi$. In fact, using an induction argument, there is a collection $\{A_i\}_{i\in \omega}$ of subsets of $\omega$ such that $A_i \cap A_j = \emptyset$ for all $1 \leq i < j < \omega$ and $d(A_i)=d(\pi[A_i])=\frac{1}{2^i}$ for all $\pi\in \Pi$. Now, given a $r\in[0,1]$, let $g \in 2^\omega$ be such that $r=\sum_{i \in \omega} \frac{g(i)}{2^i}$, that is, the representation of $r$ in base $2$. Define the following sets
    \[
    A_r:= \bigcup_{i \in g^{-1}[\{1\}]} A_i \, \, \, \, \, A_{r-1}:=\bigcup_{i \in g^{-1}[\{0\}]} A_i
    \]
    Note that $r \leq \underline{d}(A_r)$, and $1-r \leq \underline{d}(A_{r-1})$. Since $A_r \cap A_{r-1}= \emptyset$, we have $d(A_r)=r$. Using a similar argument, we have $d(\pi[A_r])=r$ for all $\pi\in\Pi$.

    \item[$\mathfrak{dd}_{\{r\},\textsf{all}} \leq \mathfrak{dd}_{\{1/2\},\textsf{all}}$ for all $r \in (1/3,1/2)$.]  Let $\Pi \subseteq$ sym$(\omega)$ be a family of permutations such that $|\Pi|< \mathfrak{dd}_{\{r\},\textsf{all}}$, then there is a set $A\subseteq \omega$ such that $d(A)=r$ and $d(\pi[A])=r$ for all $\pi \in \Pi$. Using a similar argument to the previous case, there is a pairwise disjoint collection $\{A_i\}_{i\in \omega}$ of subsets of $\omega$ such that $d(A_i)=d(\pi[A_i])=r(1-r)^i$ for all $\pi\in \Pi$. Now, since $1/3 <r < 1/2$, then $\frac{1}{2r}<\frac{1}{1-r}<2$, so, by \ref{nonintegerbases}, there is $g\in 2^\omega$ such that $\frac{1}{2r}=\sum_{i\in \omega} g(i)(1-r)^i$, that is, the representation of $\frac{1}{2r}$ in base $\frac{1}{1-r}$. Define the following sets
    \[
    E:= \bigcup_{i \in g^{-1}[\{1\}]} A_i \, \, \, \, \, F:=\bigcup_{i \in g^{-1}[\{0\}]} A_i
    \]
    Note that $\underline{d}(E)\geq \sum_{i \in g^{-1}[\{1\}]}d(A_i)=\sum_{i \in g^{-1}[\{1\}]}r(1-r)^i=r\sum_{i\in \omega} g(i)(1-r)^i=r \frac{1}{2r}=1/2$. On the other hand, $\underline{d}(F) \geq \sum_{i \in g^{-1}[\{0\}]}d(A_i) =\sum_{i \in \omega}d(A_i) - \sum_{i \in g^{-1}[\{1\}]}d(A_i)= 1-1/2=1/2$. Since $E \cap F =\emptyset$ we have $d(E)=1/2$. Using a similar argument, we have $d(\pi[E])=1/2$ for all $\pi\in\Pi$.

    \item[$\mathfrak{dd}_{\{r\},\textsf{all}} \leq \mathfrak{dd}_{\{1/2\},\textsf{all}}$ for all $0<r\leq 1/3$.] It is enough to find $s\in (1/3,2/3)$ such that $\mathfrak{dd}_{\{r\},\textsf{all}} \leq \mathfrak{dd}_{\{s\},\textsf{all}}$ since $\mathfrak{dd}_{\{s\},\textsf{all}}\leq \mathfrak{dd}_{\{1/2\},\textsf{all}}$ by the previous case.  Let $\Pi \subseteq$ sym$(\omega)$ be a family of permutations such that $|\Pi|< \mathfrak{dd}_{\{r\},\textsf{all}}$, then there is a set $A\subseteq \omega$ such that $d(A)=r$ and $d(\pi[A])=r$ for all $\pi \in \Pi$. Using an argument similar to the previous cases, we can get a set $B\subseteq \omega$ such that $A \cap B = \emptyset$ and $d(B)=d(\pi[B])=r(1-r)=r-r^2$ for all $\pi \in \Pi$. In particular, $d(A\cup B)=d(\pi[A \cup B]))=2r-r^2$. Therefore, $\mathfrak{dd}_{\{r\},\textsf{all}} \leq \mathfrak{dd}_{\{2r-r^2\},all}$. Since $f(x)=2x-x^2$ is an increasing function in $(0,1)$, and $f(1/3)<2/3$, there is $n\in \omega$ such that $s:=f^{(n)}(r)\in (1/3, 2/3)$. 
\end{description}
\end{proof}

\begin{cor}
Let $X$ be a nonempty subset of $(0,1)$ such that $|X|\leq \aleph_1$, then $\mathfrak{dd}_{X,\textsf{all}}=\mathfrak{dd}_{\{1/2\},\textsf{all}}$. In particular, $\mathfrak{dd}_{\mathbb{Q}\cap(0,1),\textsf{all}}=\mathfrak{dd}_{\{1/2\},\textsf{all}}$.   
\end{cor}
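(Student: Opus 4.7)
The plan is to deduce this corollary from Theorem \ref{equalityparameters} together with a routine union argument, using the lower bound $\mathrm{cov}(\mathcal{N})\leq\mathfrak{dd}_{\{1/2\},\textsf{all}}$ from Lemma \ref{equality with non and 0-1} to absorb the size of $X$.

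First I would show $\mathfrak{dd}_{\{1/2\},\textsf{all}}\leq\mathfrak{dd}_{X,\textsf{all}}$. Pick any $r\in X$; then any family $\Pi$ witnessing $\mathfrak{dd}_{X,\textsf{all}}$ also witnesses $\mathfrak{dd}_{\{r\},\textsf{all}}$, since the condition for sets of density in $X$ is a priori stronger than the condition for sets of density $r$. Hence $\mathfrak{dd}_{\{r\},\textsf{all}}\leq\mathfrak{dd}_{X,\textsf{all}}$, and by Theorem \ref{equalityparameters} the left-hand side equals $\mathfrak{dd}_{\{1/2\},\textsf{all}}$.

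For the reverse inequality, the plan is a union of witnesses. For each $r\in X$, by Theorem \ref{equalityparameters} there is a family $\Pi_r\subseteq\mathrm{Sym}(\omega)$ of size $\mathfrak{dd}_{\{r\},\textsf{all}}=\mathfrak{dd}_{\{1/2\},\textsf{all}}$ witnessing $\mathfrak{dd}_{\{r\},\textsf{all}}$. Set $\Pi:=\bigcup_{r\in X}\Pi_r$. Then for every infinite-coinfinite $A\subseteq\omega$ with $d(A)=r\in X$, the family $\Pi_r\subseteq\Pi$ already provides a $\pi$ with $d(\pi[A])\neq d(A)$, so $\Pi$ witnesses $\mathfrak{dd}_{X,\textsf{all}}$. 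Its size is at most $|X|\cdot\mathfrak{dd}_{\{1/2\},\textsf{all}}\leq\aleph_1\cdot\mathfrak{dd}_{\{1/2\},\textsf{all}}$.

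The only nontrivial point is making sure this product collapses to $\mathfrak{dd}_{\{1/2\},\textsf{all}}$: by Lemma \ref{equality with non and 0-1} we have $\mathrm{cov}(\mathcal{N})\leq\mathfrak{dd}_{\{1/2\},\textsf{all}}$, and $\mathrm{cov}(\mathcal{N})\geq\aleph_1$ in ZFC, so $\aleph_1\leq\mathfrak{dd}_{\{1/2\},\textsf{all}}$ and thus $\aleph_1\cdot\mathfrak{dd}_{\{1/2\},\textsf{all}}=\mathfrak{dd}_{\{1/2\},\textsf{all}}$. This gives $\mathfrak{dd}_{X,\textsf{all}}\leq\mathfrak{dd}_{\{1/2\},\textsf{all}}$ and completes the equality. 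The ``in particular'' part follows since $|\mathbb{Q}\cap(0,1)|=\aleph_0\leq\aleph_1$. No real obstacle arises; the main thing to be careful about is quoting the uncountability of $\mathfrak{dd}_{\{1/2\},\textsf{all}}$ explicitly so that the cardinal arithmetic $|X|\cdot\mathfrak{dd}_{\{1/2\},\textsf{all}}=\mathfrak{dd}_{\{1/2\},\textsf{all}}$ is justified.
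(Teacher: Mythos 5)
Your proposal is correct and follows essentially the same route as the paper: the lower bound via monotonicity in the first coordinate plus Theorem \ref{equalityparameters}, and the upper bound by taking the union $\bigcup_{r\in X}\Pi_r$ of witnesses, whose size is absorbed since $\mathfrak{dd}_{\{1/2\},\textsf{all}}\geq\aleph_1$. The paper states this absorption as $\max\{\aleph_1,\mathfrak{dd}_{\{1/2\},\textsf{all}}\}=\mathfrak{dd}_{\{1/2\},\textsf{all}}$ without comment; your explicit appeal to $\mathrm{cov}(\mathcal{N})\leq\mathfrak{dd}_{\{1/2\},\textsf{all}}$ is a fine way to justify it.
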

\begin{proof}
Since $X$ is nonempty, there is $r\in X$, so $\mathfrak{dd}_{\{1/2\},\textsf{all
}}=\mathfrak{dd}_{\{r\},\textsf{all}}\leq\mathfrak{dd}_{X,\textsf{all}}$. For the other inequality, for all $r\in X$ consider a family $\Pi_r\subseteq$ Sym$(\omega)$ witness of $\mathfrak{dd}_{\{r\},\textsf{all}}$. Note that $\mathfrak{dd}_{X,\textsf{all}} \leq |\bigcup_{r\in X} \Pi_r|=\max\{\aleph_1,\mathfrak{dd}_{\{1/2\},\textsf{all}}\}=\mathfrak{dd}_{\{1/2\},\textsf{all}}$. 
\end{proof}

Now we are going to see that $\mathfrak{dd}_{X,[0,1]}$ is an upper bound of $\mathfrak{r}_X$ for all $X \subseteq [0,1]$. We need a combinatorial property which is quite similar to \cite[Lemma 4]{blass2019rearrangementnumber}.  

\begin{lemma}\label{change to osc-pre}
Let $\pi$ be a permutation of $\omega$. There is a permutation $\tau \in $ sym($\omega$) such that   
\begin{itemize}
    \item $\tau[n]=n$ for infinitely many $n\in\omega$.
    \item $\tau[n]=\pi[n]$ for infinitely many $n\in\omega$.
    \item There is an $A\in[\omega]^\omega$ such that $\tau$ preserves $A$ and $\omega\setminus A$. 
\end{itemize}
\end{lemma}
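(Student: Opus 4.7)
The plan is to construct $\tau$ by alternately building blocks of two types. Put $B_0 = [0, n_0)$ and $B_k = [n_{k-1}, n_k)$ for $k \ge 1$, where the increasing sequence $0 < n_0 < n_1 < \ldots$ is chosen inductively as follows. Take $n_0 = 1$ and $\tau|_{B_0} = \mathrm{id}$. At an odd stage $k$ (a ``$\pi$-stage'') I choose $n_k$ large enough so that $\pi[\{0,\dots,n_k-1\}] \supseteq \tau[\{0,\dots,n_{k-1}-1\}]$, and let $\tau|_{B_k}$ be the increasing enumeration bijection onto $\pi[\{0,\dots,n_k-1\}]\setminus \tau[\{0,\dots,n_{k-1}-1\}]$. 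At an even stage $k > 0$ (an ``identity-stage'') I choose $n_k > \max \tau[\{0,\dots,n_{k-1}-1\}]$ and let $\tau|_{B_k}$ be the increasing enumeration bijection onto $\{0,\dots,n_k-1\}\setminus \tau[\{0,\dots,n_{k-1}-1\}]$. The cardinalities on both sides of each bijection match by the way $n_k$ was chosen, so $\tau \in \mathrm{sym}(\omega)$ is well-defined and satisfies $\tau[n_k] = \pi[n_k]$ at every odd $k$ and $\tau[n_k] = n_k$ at every even $k$; this settles the first two conditions.

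For the third condition, take $A := \bigcup_{j \ge 0} B_{2j+1}$ and $\omega\setminus A = \bigcup_{j \ge 0} B_{2j}$; both are infinite. The key observation is that, because at each even stage $\tau$ maps $\{0,\dots,n_{2j}-1\}$ onto itself, we get
\[
\tau[B_{2j+1}\cup B_{2j+2}] \;=\; \{0,\dots,n_{2j+2}-1\}\setminus\{0,\dots,n_{2j}-1\} \;=\; [n_{2j}, n_{2j+2}),
\]
so both $\tau[B_{2j+1}]$ and $\tau[B_{2j+2}]$ lie inside $[n_{2j}, n_{2j+2})$. Since $\tau$ is increasing inside each block by construction, and the intervals $[n_{2j}, n_{2j+2})$ are pairwise disjoint and arranged in increasing order as $j$ grows, $\tau$ is strictly increasing on $A$; by the same reasoning it is also strictly increasing on $\omega\setminus A$ (using $\tau[B_0] = [0,n_0)$ and $\tau[B_{2j}] \subseteq [n_{2j-2}, n_{2j})$ for $j \ge 1$). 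Thus $\tau$ preserves both $A$ and $\omega\setminus A$.

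The main obstacle is precisely this preservation clause: if one did nothing beyond matching $\tau$ with $\pi$ on initial segments, the images of successive $\pi$-blocks could interleave and no infinite-coinfinite $A$ would work. The remedy is exactly the inserted identity-stage between every two $\pi$-stages: the identity condition $\tau[n_{2j}] = n_{2j}$ at the boundaries traps the entire image of $B_{2j+1}\cup B_{2j+2}$ inside $[n_{2j}, n_{2j+2})$, which, together with within-block monotonicity coming from the increasing enumerations, forces $\tau$ to be monotone on each of $A$ and $\omega\setminus A$. The feasibility of the inductive choice of $n_k$ at each stage is routine: since $\pi$ is a bijection of $\omega$, every finite set lies in $\pi[\{0,\dots,n-1\}]$ for $n$ large enough, and each partial $\tau$-image built so far is finite.
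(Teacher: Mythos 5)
Your proof is correct and follows essentially the same construction as the paper: alternating blocks where $\tau$ matches $\pi$ on an initial segment and then is reset to satisfy $\tau[n]=n$, with increasing enumerations within blocks and $A$ taken to be the union of the $\pi$-blocks. The trapping argument via the identity boundaries $\tau[n_{2j}]=n_{2j}$ is exactly the mechanism the paper uses to get order preservation on $A$ and $\omega\setminus A$.
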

\begin{proof}
We will define, by induction on $k$, injective functions $\tau_k:n_k\rightarrow \omega$ to serve as initial segments of $\tau$, and finite sets $A_k$, which will be the finite segments $A\cap[0,n_k]$, such that for all $x,y \in A_k$, and for all $x,y\in n_k \setminus A_k$, we have $x<y$ if and only if $\tau_k(x)<\tau_k(y)$.  First, define $\tau_0$ as the empty function and $A_0=\emptyset$. Then, suppose $\tau_k: n_k \rightarrow \omega$ and $A_k$ are already constructed for $k$ even number. Besides, we can suppose that $\tau_k[n_k]=n_k$. To construct $\tau_{k+1}$, define $n_{k+1}:=\max\pi^{-1}[n_k]+1$ and $B:=\pi[n_{k+1}]\setminus n_k$. Note that $|B|=n_{k+1}- n_k$. Then, consider the bijection $g:n_{k+1} \setminus n_k \rightarrow B$ that preserves the order of $n_{k+1} \setminus n_k$. Set $\tau_{k+1}:=\tau_k \cup g$ and $A_{k+1}=A_k \cup n_{k+1} \setminus n_k$. Note that we have $\tau_{k+1}[n_{k+1}]=\pi[n_{k+1}]$. Now, to construct $\tau_{k+2}$ define $n_{k+2}:=\max\tau_{k+1}[n_{k+1}]+1$ (if $n_{k+2}=n_{k+1}$ reset $n_{k+2}$ to $n_{k+1}+1$) and $C:=n_{k+2}\setminus \tau_{k+1}[n_{k+1}]$. Note that $|C|=n_{k+2}-n_{k+1}$. Then, consider the bijection $h: n_{k+2}\setminus n_{k+1} \rightarrow C$ that preserves the order of $n_{k+2} \setminus n_{k+1}$. Set $\tau_{k+2}:=\tau_{k+1} \cup h$ and $A_{k+2}=A_{k+1}$. Note that we have $\tau_{k+2}[n_{k+2}]=n_{k+2}$. Since $n_{k+2}\setminus A_{k+2} = n_{k+2} \setminus n_{k+1} \cup n_k \setminus A_k$, the order of this set is preserved by $\tau_{k+2}$ by construction of $h$ and the hypothesis of induction. Finally define $\tau:=\bigcup_{k<\omega} \tau_k$ and $A:=\bigcup_{k<\omega} A_k$. 
\end{proof}

\begin{lemma}\label{relation permutation-2set}
Let $\pi$ be a permutation of $\omega$ and $\rho\in(0,1)$. Then, there exists an infinite set $A\subseteq \omega$ such that for all $B\in[\omega]^\omega$ we have
\[
\text{if } d_{\omega\setminus A}(B)=d_A(B)=\rho \text{ then, } d(\pi[B])=d(B) \text{ or } d(\pi[B]) \text{ oscillates.}  
\]
\end{lemma}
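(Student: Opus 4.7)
I will apply Lemma~\ref{change to osc-pre} to $\pi$ to obtain a permutation $\tau\in\mathrm{sym}(\omega)$ and an infinite set $A\subseteq\omega$ witnessing the three bullets: $\tau[n]=n$ as sets for infinitely many $n$; $\tau[n]=\pi[n]$ as sets for all $n$ in some infinite set $S_{2}$ (in the construction, $n=n_{2k+1}$); and $\tau$ is order-preserving on both $A$ and $\omega\setminus A$. My claim is that this $A$ witnesses the present lemma. Fix $B$ with $d_{A}(B)=d_{\omega\setminus A}(B)=\rho$.

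\textbf{Step 1.} I first show $d(\tau[B])=\rho$. Mimicking the short computation in the proof of Lemma~\ref{permutations-infinite sets} separately on the order-preserving restrictions $\tau|_{A}$ and $\tau|_{\omega\setminus A}$ yields
$d_{\tau[A]}(\tau[B])=d_{A}(B)=\rho$ and $d_{\tau[\omega\setminus A]}(\tau[B])=d_{\omega\setminus A}(B)=\rho$.
Since $\{\tau[A],\tau[\omega\setminus A]\}$ partitions $\omega$, writing $|\tau[B]\cap n|/n$ as the convex combination of these two relative densities weighted by $|\tau[A]\cap n|/n$ and $|\tau[\omega\setminus A]\cap n|/n$ forces $d(\tau[B])=\rho$. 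The same averaging on the partition $\{A,\omega\setminus A\}$ also gives $d(B)=\rho$.

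\textbf{Step 2.} Assume $d(\pi[B])$ exists, equal to $\sigma$; I aim for $\sigma=\rho$. For each $n\in S_{2}$, from $\tau^{-1}[\pi[n]]=n=\pi^{-1}[\pi[n]]$ I deduce
\[
|\tau[B]\cap\pi[n]|=|B\cap n|=|\pi[B]\cap\pi[n]|.
\]
Setting $M_{n}=\max\pi[n]+1$, so $\pi[n]\subseteq M_{n}$ with $|M_{n}\setminus\pi[n]|=M_{n}-n$, and splitting the initial segment $[0,M_{n})$ into $\pi[n]$ and its complement, I get the identities
\[
|\pi[B]\cap M_{n}|=|B\cap n|+|B\cap(\pi^{-1}[M_{n}]\setminus n)|,\qquad |\tau[B]\cap M_{n}|=|B\cap n|+|B\cap(\tau^{-1}[M_{n}]\setminus n)|,
\]
where both ``tail preimages'' are subsets of $[n,\infty)$ of common size $M_{n}-n$.

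\textbf{Step 3.} Along $n=n_{2k+1}\in S_{2}$ the tail preimage $\pi^{-1}[M_{n}]\setminus n$ lies inside the two consecutive blocks $[n_{2k+1},n_{2k+2})\subseteq\omega\setminus A$ and $[n_{2k+2},n_{2k+3})\subseteq A$ of the alternating $A/\omega\setminus A$ decomposition produced in Lemma~\ref{change to osc-pre}. Using the freedom to enlarge the witnessing integers $n_{2k+1}$ so that the block sizes $a_{k}=n_{2k+1}-n_{2k}$ and $b_{k}=n_{2k+2}-n_{2k+1}$ dominate the previous partial sums, the averaged density conditions $d_{A}(B)=d_{\omega\setminus A}(B)=\rho$ refine to control the density of $B$ block by block, forcing $|B\cap(\pi^{-1}[M_{n}]\setminus n)|/(M_{n}-n)\to\rho$ and the same convergence for the $\tau$-tail. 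Plugging back,
\[
\tfrac{|\pi[B]\cap M_{n}|}{M_{n}}=\tfrac{n}{M_{n}}\cdot\tfrac{|B\cap n|}{n}+\tfrac{M_{n}-n}{M_{n}}\cdot\tfrac{|B\cap(\pi^{-1}[M_{n}]\setminus n)|}{M_{n}-n}\longrightarrow \alpha\rho+(1-\alpha)\rho=\rho
\]
along any subsequence where $n/M_{n}\to\alpha\in[0,1]$. Since $d(\pi[B])$ is assumed to exist, this limit equals $\sigma$, so $\sigma=\rho$. Hence either $d(\pi[B])=\rho=d(B)$ or the limit fails to exist and $d(\pi[B])$ oscillates.

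\textbf{Main obstacle.} The nontrivial content is Step~3: upgrading the crude estimate $\bigl||\tau[B]\cap M_{n}|-|\pi[B]\cap M_{n}|\bigr|\leq M_{n}-n$ (which only forces $\sigma$ into an interval around $\rho$) to the exact identification $|B\cap(\pi^{-1}[M_{n}]\setminus n)|/(M_{n}-n)\to\rho$. This requires strengthening the inductive choice in Lemma~\ref{change to osc-pre} so that the witnessing integers grow rapidly enough for the averaged hypotheses to yield per-block density control, and verifying that the mixing set $\pi^{-1}[M_{n}]\setminus n$ splits compatibly between the $A$-block and the $\omega\setminus A$-block that contain it.
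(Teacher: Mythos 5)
Your Step 1 is fine and coincides with the paper's opening move: order-preservation of $\tau$ on $A$ and on $\omega\setminus A$ plus averaging over the partition gives $d(\tau[B])=\rho=d(B)$. The genuine gap is Step 3, and you have flagged it yourself: the claim $|B\cap(\pi^{-1}[M_n]\setminus n)|/(M_n-n)\to\rho$ does not follow from the hypotheses, and no acceleration of the $n_k$'s in Lemma~\ref{change to osc-pre} will make it follow. The hypotheses $d_A(B)=d_{\omega\setminus A}(B)=\rho$ only control averages of $B$ along \emph{initial segments} of $A$ and of $\omega\setminus A$; the hole set $\pi^{-1}[M_n]\setminus n$ is an arbitrary subset, dictated by $\pi$, of the block(s) it meets, not a prefix of them. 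Even if the blocks grow so fast that prefix densities inside each block are forced near $\rho$, the density of $B$ on a scattered subset of a block is unconstrained: for suitable $\pi$ one can build $B$ containing all of $\pi^{-1}[M_n]\setminus n$ for infinitely many of your checkpoints (compensating elsewhere in the same blocks) while keeping $d_A(B)=d_{\omega\setminus A}(B)=\rho$. So from this decomposition you really only get the crude bound $\bigl||\tau[B]\cap M_n|-|\pi[B]\cap M_n|\bigr|\le M_n-n$, which does not identify $\sigma$ with $\rho$; the proof is incomplete at exactly the point where the lemma's content lies.

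The repair is not a quantitative strengthening of Lemma~\ref{change to osc-pre} but using it on the other side, which is what the paper does: apply it to $\pi^{-1}$ and invert, so that $\tau^{-1}[m]=m$ for infinitely many $m$, $\tau^{-1}[m]=\pi^{-1}[m]$ for infinitely many $m$, and $\tau$ still preserves $A$ and $\omega\setminus A$ (this is exactly the form of $\tau$ the paper's proof invokes). At a checkpoint of the second kind the agreement happens on an initial segment of the range, so $|\pi[B]\cap m|=|B\cap\pi^{-1}[m]|=|B\cap\tau^{-1}[m]|=|\tau[B]\cap m|$ exactly, with no hole set to estimate. Since Step 1 gives $d(\tau[B])=\rho$, the ratios $|\pi[B]\cap m|/m$ tend to $\rho$ along this infinite sequence of $m$'s; hence if $d(\pi[B])$ exists at all it equals $\rho=d(B)$, which is the desired dichotomy. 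Your Step 2 already records the exact count agreement on the sets $\pi[n]$; the only missing idea is to place that agreement on initial segments of the range rather than trying to control $B$ on the complementary holes.
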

\begin{proof}
Let $\pi$ be a permutation of $\omega$.  By \ref{change to osc-pre}, consider a permutation $\tau$ and a set $A$ such that $\tau^{-{1}}[n]=n$ for infinitely many $n\in \omega$, $\tau^{-{1}}[n]=\pi^{-{1}}[n]$ for infinitely many $n\in \omega$, and $\tau$ preserves $A$ and $\omega\setminus A$. Let $B\in[\omega]^\omega$ be such that $d_{\omega\setminus A}(B)=d_A(B)=\rho$. Since $\tau$ preserves $A$ and $\omega\setminus A$, we have $d_{\tau[A]}(\tau[B])=d_A(B)=\rho$ and $d_{\tau[\omega\setminus A]}(\tau[B])=d_{\omega\setminus A}(B)=\rho$. It is easy to check that this implies $d(\tau[B])=\rho=d(B)$. Recall that if $d(B)\neq d(\pi[B])$ and $d(\pi[B])\in[0,1]$, by the others properties of $\tau$, then $d(\tau[B])$ oscillates. Therefore, $d(\pi[B])=d(B)$ or $d(\pi[B])$ oscillates. 
\end{proof}

\begin{cor}
For all $X\subseteq [0,1]$ with $X\neq\emptyset$ we have $\mathfrak{r}_{X}\leq \mathfrak{dd}_{X,[0,1]}$. In particular, $\mathfrak{r}_{(0,1)}\leq \mathfrak{dd}_{(0,1),[0,1]}$.   
\end{cor}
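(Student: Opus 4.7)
The plan is to mimic the construction in Corollary \ref{bisecting-permutation}, substituting Lemma \ref{relation permutation-2set} for Lemma \ref{permutations-infinite sets}. Start with a family $\Pi \subseteq$ sym$(\omega)$ witnessing $\mathfrak{dd}_{X,[0,1]}$ and, for each $\pi \in \Pi$, let $A_\pi$ be the infinite set supplied by Lemma \ref{relation permutation-2set}. Inspection of the construction in Lemma \ref{change to osc-pre}, where elements of $\omega$ are alternately placed into $A$ and into $\omega\setminus A$ as the stage index ranges over odd and even values, shows that $\omega \setminus A_\pi$ is also infinite. Define
\[
\mathcal{R} := \{A_\pi \,:\, \pi \in \Pi\} \cup \{\omega \setminus A_\pi \,:\, \pi \in \Pi\}.
\]
This is a family of infinite subsets of $\omega$ with $|\mathcal{R}| \leq \mathfrak{dd}_{X,[0,1]}$.

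To show $\mathcal{R}$ witnesses $\mathfrak{r}_X$, I would argue by contradiction. Suppose some $r \in X$ and some infinite-coinfinite $S \subseteq \omega$ satisfy $d_R(S) = r$ for every $R \in \mathcal{R}$. Then $d_{A_\pi}(S) = d_{\omega\setminus A_\pi}(S) = r$ for every $\pi \in \Pi$, and a routine averaging (writing $|S \cap n| = |S \cap A_\pi \cap n| + |S \cap (\omega\setminus A_\pi) \cap n|$ and dividing by $n$) gives $d(S) = r \in X$. Lemma \ref{relation permutation-2set} then yields, for every $\pi \in \Pi$, either $d(\pi[S]) = r$ or $d(\pi[S]) = \textsf{osc}$. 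In either case $\pi$ fails the requirement $d(\pi[S]) \in [0,1]$ with $d(\pi[S]) \neq d(S)$, contradicting that $\Pi$ witnesses $\mathfrak{dd}_{X,[0,1]}$.

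The conceptual heart of the argument is the role of the target set $[0,1]$ in $\mathfrak{dd}_{X,[0,1]}$: Lemma \ref{relation permutation-2set} provides precisely the dichotomy ``density preserved or oscillation'', and the exclusion of $\textsf{osc}$ from $[0,1]$ is what forces the contradiction. The only technical caveat concerns boundary values $r \in \{0,1\}$, since Lemma \ref{relation permutation-2set} is stated for $\rho \in (0,1)$; however, the given proof of that lemma only uses that $\rho$ is a real in $[0,1]$, so it extends for free. The ``in particular'' statement $\mathfrak{r}_{(0,1)} \leq \mathfrak{dd}_{(0,1),[0,1]}$ is just the case $X = (0,1)$, where this caveat does not even arise.
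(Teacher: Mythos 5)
Your proof is correct and essentially identical to the paper's: it uses the same family $\{A_\pi,\omega\setminus A_\pi : \pi\in\Pi\}$ obtained from Lemma \ref{relation permutation-2set} and the same contradiction, namely that each $\pi\in\Pi$ either preserves $d(S)$ or sends it to \textsf{osc}, which is excluded from the target set $[0,1]$. Your extra remarks (the averaging step giving $d(S)=r$, the infinitude of $\omega\setminus A_\pi$, and the observation that the lemma's proof works verbatim for $\rho\in\{0,1\}$) merely make explicit details the paper leaves implicit.
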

\begin{proof}
Let $\Pi \subseteq$ sym$(\omega)$ be a witness of $\mathfrak{dd}_{X,[0,1]}$. For each $\pi\in\Pi$ denote as $A_\pi$ the set given by \ref{relation permutation-2set}. Consider the collection $\mathfrak{R}:= \bigcup_{\pi \in \Pi} \{A_\pi,\omega\setminus A_\pi\}$. Suppose that there is $\rho\in X$ and $B\subseteq \omega$ such that $d_{\omega\setminus A_\pi}(B)=d_{A_\pi}(B)=\rho$ for all $\pi \in \Pi$. By \ref{relation permutation-2set}, $d(\pi[B])=d(B)$ or $d(\pi[B])$ oscillates for all $\pi \in \Pi$, but $d(B)=\rho$, so this is a contradiction. 
\end{proof}

In \cite[Corollary 2.7]{valderrama2024cardinalinvariantsrelateddensity} we proved that $\mathfrak{r}_{1/2}=\aleph_1$ in the Hechler model. A slight modification of that argument also shows that $\mathfrak{r}_{(0,1)}=\aleph_1$ in the Hechler model. Besides, in \cite[Corollary 21]{brech2024densitycardinals}, it was proved that $\mathfrak{b}\leq\mathfrak{dd}_{\rho,[0,1]}$. Therefore, in the Hechler model $\mathfrak{r}_{(0,1)}$ is strictly less than $\mathfrak{b}$ and $\mathfrak{dd}_{\rho,[0,1]}$ for all $\rho\in(0,1)$. Now, let us see that $\min\{\mathfrak{d},\mathfrak{r}\} \leq \mathfrak{dd}_{\{\textsf{osc}\},\textsf{all}}$.

\begin{defn}
We say that a sequence $\mathcal{A}=(A_n : n \in \omega )$ of finite subsets of $\omega$ is a block sequence if $\max(A_n)<\min(A_{n+1})$. We say that a set $B \in [\omega]^\omega$ \textit{splits} $\mathcal{A}$ if both $\{n \in \omega \, | \, A_n \subseteq B \}$ and $\{n \in \omega \, | \, A_n \cap B = \emptyset \}$ are infinite.
\end{defn}

There are two cardinal characteristics associated with the previous notion. 
\[
\begin{array}{ll}
    \mathfrak{fs}&= \min\{ | \mathcal{F} | \, | \,  \mathcal{F} \subseteq [\omega]^\omega \text{ and every block sequence is split by a member of $\mathcal{F}$} \}   \\
    \mathfrak{fr}&= \min\{ | \mathcal{F} | \, | \,  \mathcal{F} \text{ consists of block sequences and no single $A\in [\omega]^\omega$ splits all members of $\mathcal{F}$} \} 
\end{array}
\]
It is known that $\mathfrak{fs} = \max\{ \mathfrak{b}, \mathfrak{s} \}$ and $\mathfrak{fr} = \min\{ \mathfrak{d}, \mathfrak{r} \}$ (see \cite{Kamburelis1996-KAMS} and \cite{Brendle1998}).

\begin{lemma}\label{permutations-block sequence}
Let $\pi$ be a permutation of $\omega$. Then, there exists a block sequence $\mathcal{A}=(A_n : n \in \omega)$ such that for all $B\in[\omega]^\omega$ we have
\[
\text{ if $B$ splits $\mathcal{A}$ then } d(\pi[B]) \text{ oscillates }
\]
\end{lemma}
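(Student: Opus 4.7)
The plan is to construct the block sequence $\mathcal{A}=(A_n : n\in\omega)$ so that each $\pi[A_n]$ occupies almost all of some initial segment $[0,M_n)\subseteq\omega$. The crucial feature: if $A_n\subseteq B$ then $\pi[B]$ will fill $[0,M_n)$ almost completely, while if $A_n\cap B=\emptyset$ then $\pi[B]$ will barely touch $[0,M_n)$. Any $B$ splitting $\mathcal{A}$ must then have infinitely many scales at which $|\pi[B]\cap M_n|/M_n$ is close to $1$ and infinitely many at which it is close to $0$, forcing $d(\pi[B])$ to oscillate.

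Concretely, I would set $a_0:=0$ and, having defined $a_n$, choose $M_n\in\omega$ satisfying both $M_n>\max\pi[[0,a_n)]$ and $M_n\geq n\cdot a_n$ (or any growth condition ensuring $a_n/M_n\to 0$). Define
\[
A_n \,:=\, \pi^{-1}[[0,M_n)]\setminus [0,a_n) \qquad\text{and}\qquad a_{n+1}:=\max A_n + 1.
\]
Since $\pi^{-1}[[0,M_n)]$ has exactly $M_n$ elements and contains $[0,a_n)$ by the choice of $M_n$, the set $A_n$ is a nonempty finite set of size $M_n-a_n$ lying in $[a_n,\infty)$, so $(A_n)_{n\in\omega}$ is automatically a block sequence.

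The verification rests on the identity $\pi[A_n] = [0,M_n)\setminus \pi[[0,a_n)]$, which yields $|\pi[A_n]|=M_n-a_n$. Hence $A_n\subseteq B$ implies $|\pi[B]\cap M_n|/M_n\geq 1-a_n/M_n$, while $A_n\cap B=\emptyset$ forces $B\cap\pi^{-1}[[0,M_n)]\subseteq [0,a_n)$ and therefore $|\pi[B]\cap M_n|/M_n\leq a_n/M_n$. If $B$ splits $\mathcal{A}$, these two estimates hold along two infinite subsequences of indices, and combining them with $a_n/M_n\to 0$ produces $\limsup_{N\to\infty}|\pi[B]\cap N|/N = 1$ and $\liminf_{N\to\infty}|\pi[B]\cap N|/N = 0$, so $d(\pi[B])$ oscillates.

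I do not expect a genuine obstacle: the construction is essentially bookkeeping, relying only on $|\pi^{-1}[[0,M)]|=M$ so that removing the short head $[0,a_n)$ leaves a set whose image is an asymptotically full initial segment of $\omega$. The only point requiring care is to pick $M_n$ growing fast enough relative to $a_n$, which is straightforward because $a_n$ is fixed before $M_n$ is chosen in the induction step.
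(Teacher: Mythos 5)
Your construction is correct and essentially the same as the paper's: there the blocks are likewise preimages of long initial segments minus a short head (the paper sets $A_{n+1}=\pi^{-1}[[i_n,2^{i_n})]$ with $i_n:=\max\pi[\max A_n+1]+1$, so the head is excluded automatically rather than subtracted), and it derives exactly your two estimates, ratio $\geq 1-i_n/2^{i_n}$ when the block is contained in $B$ and $\leq i_n/2^{i_n}$ when it is avoided, at the scales $2^{i_n}$ in place of your $M_n$. The only microscopic point to add is to choose $M_n$ strictly larger than $a_n$ (your freedom in picking $M_n$ clearly allows this) so that $A_n\neq\emptyset$ and $a_{n+1}=\max A_n+1$ is always defined.
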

\begin{proof}
We are going to construct the block sequence $\mathcal{A}=(A_n : n \in \omega)$ by induction on $\omega$. Set $A_0=\{0\}$. Now suppose we already constructed $A_n$. Define $i_n:=\max\pi[\max A_n +1]+1$, and consider the interval of natural numbers $I:=[i_n,2^{i_n})$. Set $A_{n+1}:=\pi^{-1}[I]$. Now, let $B\in[\omega]^\omega$ be such that $B$ splits $\mathcal{A}$, and let $n\in \omega$. If $A_n \subseteq B$, then 
\[
\frac{|\pi[B]\cap 2^{i_n}|}{2^{i_n}}\geq \frac{2^{i_n}-i_n}{2^{i_n}} = 1 - \frac{i_n}{2^{i_n}}.
\]
In a similar way, if $A_n \cap B =\emptyset$ then
\[
\frac{|\pi[B]\cap 2^{i_n}|}{2^{i_n}}\leq \frac{i_n}{2^{i_n}}
\]
Since there are infinitely many $n\in \omega$ such that $A_n \subseteq B$ and there are infinitely many $n\in \omega$ such that $A_n \cap B =\emptyset$, we conclude that $d(\pi[B])$ oscillates.
\end{proof}

\begin{cor}
$\mathfrak{fr} \leq \mathfrak{dd}_{\{\textsf{osc}\},\textsf{all}}$    
\end{cor}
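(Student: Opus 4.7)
The plan is to use Lemma \ref{permutations-block sequence} to convert a witness of $\mathfrak{dd}_{\{\textsf{osc}\},\textsf{all}}$ into a witness of $\mathfrak{fr}$. Starting with $\Pi \subseteq \mathrm{sym}(\omega)$ realizing $\mathfrak{dd}_{\{\textsf{osc}\},\textsf{all}}$, I would first adjoin $\mathrm{id}_\omega$ to $\Pi$; since cardinal characteristics of the continuum are uncountable, this does not change $|\Pi|$. For each $\pi \in \Pi$, let $\mathcal{A}_\pi$ be the block sequence provided by Lemma \ref{permutations-block sequence}, and set $\mathcal{F} := \{\mathcal{A}_\pi : \pi \in \Pi\}$, a family of block sequences of cardinality at most $|\Pi|$.

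To finish I would show $\mathcal{F}$ witnesses $\mathfrak{fr}$, i.e.\ no single $B \in [\omega]^\omega$ splits every $\mathcal{A}_\pi \in \mathcal{F}$. Assume toward contradiction that such a $B$ exists. Splitting a block sequence requires infinitely many of its blocks to lie inside $B$ and infinitely many to be disjoint from $B$, so $B$ is automatically infinite and coinfinite. Applying Lemma \ref{permutations-block sequence} with $\pi = \mathrm{id}_\omega$ gives that $d(B)$ oscillates, i.e.\ $d(B) = \textsf{osc} \in \{\textsf{osc}\}$; applying it with arbitrary $\pi \in \Pi$ gives $d(\pi[B]) = \textsf{osc}$ as well. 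Hence $d(\pi[B]) = d(B)$ for every $\pi \in \Pi$, contradicting the fact that $\Pi$ witnesses $\mathfrak{dd}_{\{\textsf{osc}\},\textsf{all}}$. Therefore $\mathfrak{fr} \leq |\mathcal{F}| \leq |\Pi| = \mathfrak{dd}_{\{\textsf{osc}\},\textsf{all}}$.

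There is essentially no obstacle: the argument is a routine dualization of the lemma. The only subtlety is verifying that the hypothetical splitting set $B$ satisfies $d(B) \in \{\textsf{osc}\}$ so that the witness property of $\Pi$ can be invoked; this is exactly why $\mathrm{id}_\omega$ is adjoined to $\Pi$ at the outset.
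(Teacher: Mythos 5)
Your proof is correct and follows essentially the same route as the paper: adjoin the identity to a witness $\Pi$ of $\mathfrak{dd}_{\{\textsf{osc}\},\textsf{all}}$, take the block sequences $\mathcal{A}_\pi$ from Lemma \ref{permutations-block sequence}, and derive the contradiction that a common splitter $B$ would have $d(\pi[B])=\textsf{osc}=d(B)$ for all $\pi\in\Pi$. Your extra remarks (that a splitter is automatically infinite-coinfinite, and that adjoining $\mathrm{id}_\omega$ does not change the cardinality) are the same implicit points the paper handles with its ``without loss of generality'' step.
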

\begin{proof}
Let $\Pi\subseteq$ sym$(\omega)$ be a witness of $\mathfrak{dd}_{\{\textsf{osc}\},\textsf{all}}$. Without loss of generality, we can suppose that the identity function $id$ belongs to $\Pi$. For all $\pi\in \Pi$ consider the block sequence $\mathcal{A}_\pi$ given by \ref{permutations-block sequence}. If there exists $B\in[\omega]^\omega$ such that $B$ splits $\mathcal{A}_\pi$ for all $\pi\in \Pi$, then by \ref{permutations-block sequence} $d(\pi[B])$ oscillates for all $\pi\in \Pi$. In particular, $d(id[B])=d(B)$ oscillates. Therefore, that set $B$ cannot exist by the definition of $\Pi$.
\end{proof}

\begin{cor}
$\mathfrak{dd}_{\{\textsf{osc}\},\textsf{all}}^\perp \leq \mathfrak{fs}$     
\end{cor}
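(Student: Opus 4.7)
The approach is to dualize the preceding corollary. Let $\mathcal{F}\subseteq[\omega]^\omega$ be a family witnessing $\mathfrak{fs}$, so $|\mathcal{F}|=\mathfrak{fs}$ and every block sequence is split by some $F\in\mathcal{F}$. The plan is to produce from $\mathcal{F}$ a family $\mathcal{A}$ of infinite-coinfinite sets of oscillating density with $|\mathcal{A}|\leq\mathfrak{fs}$ witnessing $\mathfrak{dd}^\perp_{\{\textsf{osc}\},\textsf{all}}$.

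The key observation is that $\mathcal{F}$ almost already works. For each $\pi\in\mathrm{Sym}(\omega)$, Lemma \ref{permutations-block sequence} yields a block sequence $\mathcal{C}_\pi$ any of whose splitters $S$ satisfies $d(\pi[S])=\textsf{osc}$; by the $\mathfrak{fs}$-property some $F\in\mathcal{F}$ splits $\mathcal{C}_\pi$, so $d(\pi[F])=\textsf{osc}$. If additionally $d(F)=\textsf{osc}$, then $F$ witnesses the $\mathfrak{dd}^\perp$-property for $\pi$. Hence the task reduces to producing, from $\mathcal{F}$, a family of oscillating-density sets of the same cardinality that still splits every $\mathcal{C}_\pi$.

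For the construction, I would fix for each $m<\omega$ a partition $\omega=\bigsqcup_k I_k^{(m)}$ into intervals with $|I_k^{(m)}|/|I_{k+1}^{(m)}|\to 0$ at growth rates increasing in $m$, and set $E^{(m)}=\bigcup_{k\text{ even}}I_k^{(m)}$, which has oscillating density with $\overline{d}=1$ and $\underline{d}=0$. Attach to each $F\in\mathcal{F}$ and each $m$ the four Boolean modifications $F\cup E^{(m)}$, $F\cup(\omega\setminus E^{(m)})$, $F\cap E^{(m)}$, and $F\setminus E^{(m)}$. A direct density computation shows that for any $F$ at least one of the four sets has oscillating density (if $\overline{d}(F)<1$ then $F\cup E^{(m)}$ oscillates, and if $\underline{d}(F)>0$ then $F\cap E^{(m)}$ oscillates, covering all cases). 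Moreover, partitioning the splitting witnesses $\{n:C_n\subseteq F\}$ and $\{n:C_n\cap F=\emptyset\}$ according to the parity of the interval $I_{k(n)}^{(m)}$ containing $C_n$, a four-way case analysis shows that whenever $F$ splits a block sequence $\mathcal{C}$ whose blocks each fit inside a single $I_k^{(m)}$, at least one of the four Boolean modifications both oscillates and splits $\mathcal{C}$. Take $\mathcal{A}$ to be the collection of all these oscillating modifications across $F\in\mathcal{F}$ and $m<\omega$, yielding $|\mathcal{A}|\leq\aleph_0\cdot\mathfrak{fs}=\mathfrak{fs}$.

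The main obstacle is ensuring, for each $\pi$, compatibility between the block sequence $\mathcal{C}_\pi$ from Lemma \ref{permutations-block sequence} and at least one of the partitions $(I_k^{(m)})$: each block of $\mathcal{C}_\pi$ should sit inside a single $I_{k(n)}^{(m)}$ for all but finitely many $n$, so that the parity argument actually goes through. Because the blocks of $\mathcal{C}_\pi$ can span arbitrarily wide ranges of $\omega$ depending on $\pi$, a single partition does not suffice; the countable family of partitions of varying growth rates, fortunately available at cost $\aleph_0$, resolves this. The rest of the verification is routine case-chasing but must be carried out with care to confirm that both oscillation and splitting survive simultaneously in the chosen modification.
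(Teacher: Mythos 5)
There is a genuine gap, and it sits exactly at the point you flag as the ``main obstacle'': a countable family of interval partitions does not resolve it. Whatever countable collection $\{(I^{(m)}_k)_{k}:m<\omega\}$ you fix in advance, there is a single block sequence defeating all of them simultaneously: choose the $n$-th block so that, for every $j\le n$, it contains a pair $\{t-1,t\}$ where $t$ is the left endpoint of some interval of the $j$-th partition lying beyond the previous block. Then for each fixed $m$, every block from the $m$-th one on straddles an even/odd boundary of $(I^{(m)}_k)_k$. Moreover, such a block sequence can be realized as the sequence $\mathcal{C}_\pi$ of Lemma \ref{permutations-block sequence}: there the $(n+1)$-st block is $\pi^{-1}[[i_n,2^{i_n})]$, and one may build $\pi$ recursively alongside that construction so that each such preimage contains the required straddling pairs. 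For this $\pi$ and any $F\in\mathcal{F}$ and any $m$, none of the four modifications $F\cup E^{(m)}$, $F\cup(\omega\setminus E^{(m)})$, $F\cap E^{(m)}$, $F\setminus E^{(m)}$ splits $\mathcal{C}_\pi$: every sufficiently late block meets both $E^{(m)}$ and its complement, so on the side where the modification differs from $F$ no late block is contained in it or disjoint from it. Hence your family $\mathcal{A}$ need not contain any set splitting $\mathcal{C}_\pi$, and the argument produces no $A\in\mathcal{A}$ with $d(\pi[A])=\mathsf{osc}$ for that $\pi$.

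What is needed is not faster growth rates but unboundedness: the partitions must come from a $\le^*$-unbounded family, so that for every block sequence some partition has infinitely many intervals swallowing entire blocks (indeed entire consecutive pairs of blocks, one contained in $F$ and one disjoint from $F$, so that your parity analysis applies to both sides). That costs $\mathfrak{b}$ many partitions rather than $\aleph_0$ many; since $\mathfrak{b}\le\mathfrak{fs}=\max\{\mathfrak{b},\mathfrak{s}\}$ this would still give $|\mathcal{A}|\le\mathfrak{fs}$, and with that change (plus including $F$ itself when it already oscillates) your scheme can likely be repaired. This is essentially what the paper does, except that instead of modifying an arbitrary $\mathfrak{fs}$-witness it builds the oscillating family directly: it introduces $\mathfrak{fs}^\star$, the least size of a family of oscillating-density sets splitting every block sequence, notes $\mathfrak{dd}^{\perp}_{\{\mathsf{osc}\},\mathsf{all}}\le\mathfrak{fs}^\star$ via Lemma \ref{permutations-block sequence} just as you do, and then shows $\mathfrak{fs}^\star\le\max\{\mathfrak{s},\mathfrak{b}\}$ by taking a splitting family together with an unbounded family $\{f_\alpha\}$ satisfying $2^{f_\alpha(n)}\le f_\alpha(n+1)$ and using the sets $F_{A,\alpha}=\bigcup\{[f_\alpha(n),f_\alpha(n+1)):n\in A\}$, which oscillate automatically and split all block sequences by Kamburelis's argument.
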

\begin{proof}
Define $\mathfrak{fs}^\star$ as the least cardinality of a family $\mathcal{F} \subseteq [\omega]^\omega$ such that for all $F\in\mathcal{F}$ we have $d(F)$ oscillates, and every block sequence is split by a member of $\mathcal{F}$. By \ref{permutations-block sequence}, it is easy to check that $\mathfrak{dd}_{\{\textsf{osc}\},\textsf{all}}^\perp \leq \mathfrak{fs}^\star$. Let us see that $\mathfrak{fs}=\mathfrak{fs}^\star$. Note that $\mathfrak{fs}\leq\mathfrak{fs}^\star$ by definition, so it is enough to show that $\mathfrak{fs}^\star \leq \max\{\mathfrak{s},\mathfrak{d}\}$. This is quite similar to \cite[Proposition 2.1]{Kamburelis1996-KAMS}, we just point out that the sets can be chosen without density. Let $\mathcal{S}$ be a splitting family such that $|\mathcal{S}|=\mathfrak{s}$, and let $\{f_\alpha : \alpha<\mathfrak{b}\}$ be an unbounded family. Without loss of generality, we can ask that every set in $\mathcal{S}$ is infinite-coinfinite, and $2^{f_\alpha(n)}\leq f_\alpha(n+1)$ for all $n\in\omega$, and for all $\alpha<\mathfrak{b}$. For $A\in\mathcal{S}$ and $\alpha<\mathfrak{b}$ define
\[
F_{A,\alpha}=\bigcup \{[f_\alpha(n),f_\alpha(n+1)):n\in A]
\]
Note that $\frac{|F_{A,\alpha} \cap f_\alpha(n+1) |}{f_\alpha(n+1)}\geq 1- f_\alpha(n)/2^{f_\alpha(n)}$ if $n\in A$, and $\frac{|F_{A,\alpha} \cap f_\alpha(n+1) |}{f_\alpha(n+1)}\leq f_\alpha(n)/2^{f_\alpha(n)}$ if $n\not\in A$. Then $F_{A,\alpha}$ oscillates for all $A\in\mathcal{S}$ and $\alpha<\mathfrak{b}$. In \cite[Proposition 2.1]{Kamburelis1996-KAMS}, it was proved that  every block sequence is split by some $F_{A,\alpha}$.
\end{proof}

We will finish this section showing that the covering of the strong measure zero ideal is an upper bound of $\mathfrak{r}_{(0,1)}$ and $\mathfrak{dd}_{(0,1),\textsf{all}}$. Let us recall the definition of a strong measure zero set.

\begin{defn}
A set $X\subseteq 2^\omega$ has strong measure zero set if for every $f\in \omega$ there exists $g\in(2^{<\omega})^\omega$ such that $g(n)\in 2^{f(n)}$ for all $n\in \omega$ and for all $x\in X$ there is $n\in \omega$ such that $x\upharpoonright f(n) =g(n)$.
\end{defn}

There are many equivalent definitions of a strong measure zero set. We are going to use the following characterization.

\begin{lemma}\label{characterizacin strong measure zero sets}
A set $X\subseteq 2^\omega$ has strong measure zero if and only if for every family of disjoint intervals $\{I_n:n\in \omega\}$ covering $\omega$, there exists $z \in 2^\omega$ such that
\[
\forall x \in X \exists^\infty n (x\upharpoonright I_n = z\upharpoonright I_n)\]
\end{lemma}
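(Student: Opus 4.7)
The plan is to prove the two directions separately.

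For the forward direction, assume $X$ has strong measure zero and fix a disjoint interval partition $\{I_n : n \in \omega\}$ of $\omega$. The idea is to split the index set into countably many infinite pieces and exploit the SMZ definition on each piece to get one match per piece, so that the resulting $z$ witnesses infinitely many matches for every $x \in X$. Concretely, write $\omega = \bigsqcup_{k \in \omega} N_k$ with each $N_k$ infinite, enumerate $N_k = \{n^k_0 < n^k_1 < \cdots\}$, and apply SMZ to the function $f_k(j) := \max I_{n^k_j} + 1$ to obtain $g_k(j) \in 2^{f_k(j)}$ satisfying $\forall x \in X\,\exists j\ x\upharpoonright f_k(j) = g_k(j)$. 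Since $I_{n^k_j} \subseteq [0,f_k(j))$, this forces $x\upharpoonright I_{n^k_j} = g_k(j)\upharpoonright I_{n^k_j}$. Define $z \in 2^\omega$ by $z\upharpoonright I_{n^k_j} := g_k(j)\upharpoonright I_{n^k_j}$; this is well defined because $\{I_{n^k_j}\}_{k,j}$ is a relabelling of $\{I_n\}_n$. Varying $k$ then produces at least one matching index per $k$, hence infinitely many.

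For the reverse direction, the obstacle is that matches on the middle interval $I_n$ do not directly produce the initial-segment covers required by the SMZ definition. I would route through the Galvin--Mycielski--Solovay characterization: $X$ is SMZ iff for every meager $M \subseteq 2^\omega$ there is $t \in 2^\omega$ with $(t + X) \cap M = \emptyset$, where $+$ denotes coordinatewise addition modulo $2$. Combined with the classical interval presentation of meager sets --- every meager $M \subseteq 2^\omega$ is contained in a set of the form $M_{\{J_n\},w} := \{y : y\upharpoonright J_n \neq w\upharpoonright J_n \text{ for all but finitely many } n\}$ for some interval partition $\{J_n\}$ and $w \in 2^\omega$ --- the plan is: given such a coding of $M$, apply the right-hand side of the lemma to $\{J_n\}$ to extract $z$ with $\forall x \in X\,\exists^\infty n\ x\upharpoonright J_n = z\upharpoonright J_n$, and set $t := z + w$. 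Then $(t+x)\upharpoonright J_n = w\upharpoonright J_n$ holds for infinitely many $n$, so $t+x \notin M_{\{J_n\},w} \supseteq M$, verifying the Galvin--Mycielski--Solovay condition.

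The main obstacle is precisely this reverse direction: converting infinitely-many interior matches into an initial-segment cover. Invoking Galvin--Mycielski--Solovay together with the interval-partition description of meager sets cleanly dissolves that asymmetry; a direct combinatorial proof avoiding those tools would require iterating the interval characterization over several carefully tailored partitions and stitching the resulting $z$'s into a single $g$, which does not appear to admit a short argument.
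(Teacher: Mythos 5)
Your proposal is correct, but it takes a different route from the paper, which does not prove the lemma at all: it simply cites Lemma 8.1.13 of Bartoszy\'nski--Judah, so you are supplying an argument where the paper only gives a reference. Your forward direction is exactly the standard argument (partition the index set into infinitely many infinite pieces $N_k$, apply the strong measure zero definition once per piece to the lengths $\max I_{n^k_j}+1$, and splice the guesses into one $z$; disjointness of the $N_k$ upgrades ``at least one match'' to ``infinitely many''), and it is complete as written. Your reverse direction is logically sound granted the two classical facts you invoke: the chopped-reals presentation of meager sets ($M\subseteq\{y:\forall^\infty n\; y\upharpoonright J_n\neq w\upharpoonright J_n\}$) and the Galvin--Mycielski--Solovay characterization; the computation $(z+w+x)\upharpoonright J_n=w\upharpoonright J_n$ on matching blocks is right, and GMS does hold in the group $2^\omega$. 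Two caveats are worth recording. First, the direction of GMS you use (translation-avoidance implies strong measure zero) is the easy half of that theorem, and its standard proof is precisely the conversion you say you want to avoid, so the asymmetry is relocated into a citation rather than dissolved; this is legitimate but heavy machinery for a routine lemma. Second, your closing claim that a direct proof ``does not appear to admit a short argument'' is an overstatement: given $f$, choose block endpoints $a_0<a_1<\cdots$ so that between $a_n$ and $a_{n+1}$ there are at least $2^{a_n}$ values $f(k)$ with $a_n\le f(k)\le a_{n+1}$, apply the interval condition to the partition $[a_n,a_{n+1})$ to get $z$, and for each $\sigma\in 2^{a_n}$ put $g(k_\sigma):=\sigma{}^\frown\bigl(z\upharpoonright[a_n,f(k_\sigma))\bigr)$ for a distinct index $k_\sigma$; any $x$ agreeing with $z$ on $[a_n,a_{n+1})$ satisfies $x\upharpoonright f(k_\sigma)=g(k_\sigma)$ with $\sigma=x\upharpoonright a_n$, since there are only finitely many candidate initial segments. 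That self-contained argument would keep the lemma elementary and avoid any appearance of circularity with GMS.
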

\begin{proof}
See \cite[Lemma 8.1.13.]{Bartoszynski1995SetTO}.  
\end{proof}

\begin{lemma}\label{smz and splitting}
Let $\mathcal{S}$ be a family of infinite subsets of $\omega$, and consider the family $\mathcal{S}^\prime \subseteq 2^\omega$ of the characteristic functions of the elements of $\mathcal{S}$. The following holds:
\begin{center}
if $\mathcal{S}^\prime$ has strong measure zero, then there exists $Z\in[\omega]^\omega$ such that for all $S\in \mathcal{S}$ we have $\overline{d}_Z(S)=1$ and $\underline{d}_Z(S)=0$.    
\end{center}

\end{lemma}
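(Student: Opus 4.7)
The plan is to apply the characterization of strong measure zero (Lemma \ref{characterizacin strong measure zero sets}) to a slight enlargement of $\mathcal{S}'$ and a rapidly growing interval partition of $\omega$. First, set $\mathcal{T} := \mathcal{S}' \cup \{\mathbf{1} - \chi_S : S \in \mathcal{S}\} \cup \{\mathbf{1}\}$. Since the involution $x \mapsto \mathbf{1} - x$ is an isometry of $2^\omega$ (hence preserves strong measure zero), and SMZ is a $\sigma$-ideal, $\mathcal{T}$ is still SMZ. Partition $\omega = \bigsqcup_{k \in \omega} J_k$ into consecutive finite intervals $J_k = [a_k, a_{k+1})$ growing very rapidly, for instance with $a_{k+1} = 2^{a_k}$, so that $a_k / a_{k+1} \to 0$.

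Applying Lemma \ref{characterizacin strong measure zero sets} to $\mathcal{T}$ and the cover $\{J_k\}$ yields $z \in 2^\omega$ such that for every $x \in \mathcal{T}$, $x \upharpoonright J_k = z \upharpoonright J_k$ for infinitely many $k$. Set $Z := z^{-1}(\{1\})$; since $\mathbf{1} \in \mathcal{T}$, $z \upharpoonright J_k \equiv 1$ for infinitely many $k$, so $Z$ contains infinitely many whole intervals $J_k$ and is in particular infinite. For each $S \in \mathcal{S}$, the match property applied to $\chi_S$ produces infinitely many type-A indices $k$ with $S \cap J_k = Z \cap J_k$ (so $Z \cap J_k \subseteq S$), and applied to $\mathbf{1} - \chi_S$ produces infinitely many type-B indices with $Z \cap J_k \cap S = \emptyset$. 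Along a type-A index,
\[
\frac{|Z \cap S \cap a_{k+1}|}{|Z \cap a_{k+1}|} \;=\; \frac{|Z \cap S \cap a_k| + |Z \cap J_k|}{|Z \cap a_k| + |Z \cap J_k|}
\]
tends to $1$ once $|Z \cap J_k|$ dominates $|Z \cap a_k|$, which the super-exponential growth of $a_k$ is designed to provide; along a type-B index the same formula with $|Z \cap S \cap J_k| = 0$ forces the ratio to $0$. This yields $\overline{d}_Z(S) = 1$ and $\underline{d}_Z(S) = 0$.

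The main obstacle is guaranteeing the dominance $|Z \cap J_k| \gg |Z \cap a_k|$ along the match subsequences uniformly in $S$. This is immediate when $|Z \cap J_k|$ is proportional to $|J_k|$, but can fail when $S$ has density zero, since on a type-A index $|Z \cap J_k| = |S \cap J_k|$ may be far smaller than $|J_k|$. To treat this case uniformly, one should either augment $\mathcal{T}$ with countably many auxiliary functions that force $z \upharpoonright J_k$ to have density bounded below on a cofinal set of $k$, or refine the interval cover by inserting within each $J_k$ a pre-chosen block $F_k$ with $|Z \cap a_k| \ll |F_k| \ll |J_k|$, and redefine $Z$ so that $F_k \subseteq Z$ always, applying the match property only on $J_k \setminus F_k$. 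Carrying out this bookkeeping -- balancing the growth of $a_k$, the sizes $|F_k|$, and the refined cover so that both the type-A and type-B estimates go through for every $S$ simultaneously -- is the technical crux of the proof.
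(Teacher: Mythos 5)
Your outline correctly identifies its own weak point but does not close it, so as it stands this is not a proof. With $Z:=z^{-1}(\{1\})$, the type-A estimate genuinely fails for sparse $S$: on a match interval $J_k$ one has $|Z\cap J_k|=|S\cap J_k|$, which can be a single point, while the accumulated denominator $|Z\cap a_k|$ can be of size comparable to $a_k$ (for instance when $z\equiv 1$ on many earlier blocks, which your inclusion of $\mathbf{1}$ in $\mathcal{T}$ actually forces), so the ratio at $a_{k+1}$ need not approach $1$ no matter how fast $a_k$ grows. The two repairs you gesture at (auxiliary functions bounding the density of $z$ from below on cofinally many blocks, or inserting pre-chosen blocks $F_k\subseteq Z$ and matching only on $J_k\setminus F_k$) are exactly the missing content, and neither is carried out; the second in particular is delicate, because the characterization in Lemma \ref{characterizacin strong measure zero sets} matches on the blocks of the given partition and you would have to argue that forcing $F_k\subseteq Z$ does not destroy the type-A/type-B dichotomy you rely on.

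The paper avoids all of this bookkeeping with one move you did not find: it does not take $Z=z^{-1}(\{1\})$, but defines $Z$ interval-by-interval as the \emph{majority} set, i.e.\ $Z\cap I_n$ is the larger of $\{i\in I_n: z(i)=1\}$ and $\{i\in I_n: z(i)=0\}$, for a partition with $|I_n|>2^{n+1}\,|\bigcup_{i<n}I_i|$. This gives $|Z\cap I_n|\geq |I_n|/2$ independently of $S$, which is precisely the dominance you were missing, and on a match interval ($\chi_S\upharpoonright I_n=z\upharpoonright I_n$) one automatically gets either $Z\cap I_n\subseteq S$ (ones in the majority), so the ratio at $\max I_n+1$ is at least $1/(1+2^{-n})$, or $Z\cap I_n\cap S=\emptyset$ (zeros in the majority), so the ratio is at most $2^{-n}$; no augmentation of the family and only one application of the characterization are needed. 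Your idea of also matching against $\mathbf{1}-\chi_S$ is aimed at securing both the near-$0$ and the near-$1$ behaviour for every single $S$; note that each match in the paper's argument yields only one of the two estimates (so it really establishes $d_Z(S)\in\{0,1,\textsf{osc}\}$ for all $S$), but that weaker conclusion is all the subsequent corollary uses, and the full two-sided statement cannot hold anyway when $\mathcal{S}$ contains a cofinite set, so the extra strength you are working for is not the essential point.
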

\begin{proof}
 Consider an interval partition $\{I_n\}_{n\in\omega}$ of $\omega$, such that $|I_0|\geq 2$ and $|I_{n}|>2^{n+1}|\bigcup_{i<n}I_i|$. By \ref{characterizacin strong measure zero sets}, there exists $z \in 2^\omega$ such that
\[
\forall s \in \mathcal{S}^\prime \exists^\infty n (s\upharpoonright I_n = z\upharpoonright I_n)
\]
Consider the set $Z\in [\omega]^\omega$ such that for all $n\in \omega$ we have $Z \cap I_n$ is the largest of the sets $\{i \in I_n \, | \, z(i)=0\}$ and $\{i \in I_n \, | \, z(i)=1\}$ (if both have the same size, choose any set). Fix $S \in \mathcal{S}$ with characteristic function $s$. For all $n\in \omega$ such that $(s\upharpoonright I_n = z\upharpoonright I_n)$ we have:
\begin{itemize}
    \item If $Z \cap I_n$ is $\{i \in I_n \, | \, z(i)=0\}$ then
    \[
    \frac{|S\cap Z \cap (\max I_n+1)|}{|Z \cap (\max I_n+1)|}\leq \frac{|S\cap Z \cap \bigcup_{j<n}I_j |}{|Z \cap I_n|} \leq \frac{2|\bigcup_{i<n}I_i|}{|I_n|} \leq \frac{2|\bigcup_{i<n}I_i|}{2^{n+1}|\bigcup_{i<n}I_i|} = \frac{1}{2^n}
    \]
    where the first inequality holds because $S$ is disjoint of $Z$ in $I_n$, and the second inequality holds because $|I_n|\leq 2|Z\cap I_n|$.
    \item If $Z \cap I_n$ is $\{i \in I_n \, | \, z(i)=1\}$ then
    \[
    \frac{|S\cap Z \cap (\max I_n+1)|}{|Z \cap (\max I_n+1)|}\geq \frac{|Z \cap I_n |}{|\bigcup_{j<n}I_j|+|Z\cap I_n|} \geq \frac{|I_n|}{2(|\bigcup_{j<n}I_j|+\frac{|I_n|}{2})} \geq \frac{|I_n|}{\frac{|I_n|}{2^n}+|I_n|}= \frac{1}{\frac{1}{2^n}+1}
    \]
    where the first inequality holds because $S$ coincides with $Z$ in $I_n$, and the second inequality holds because $|I_n|\leq 2|Z\cap I_n|$. 
    
\end{itemize}

Since the above holds for infinitely many $n\in\omega$, we conclude that $\overline{d}_Z(S)=1$ and $\underline{d}_Z(S)=0$.    
\end{proof}

\begin{cor}
For all nonempty $X\subseteq (0,1)$ we have non$(\mathcal{SN}) \leq \mathfrak{s}_X $ and $\mathfrak{r}_X \leq$ cov$(\mathcal{SN})$.
\end{cor}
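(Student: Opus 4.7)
The plan is to extract both inequalities directly from Lemma \ref{smz and splitting}, which is precisely designed for this purpose: a family whose characteristic functions form a strong measure zero set admits a common $Z$ that makes every relative density oscillate, and oscillation is incompatible with being equal to any $r\in X\subseteq(0,1)$.

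For $\text{non}(\mathcal{SN})\leq \mathfrak{s}_X$, I would argue by contradiction. Let $\mathcal{S}$ be a witness of $\mathfrak{s}_X$ and suppose $|\mathcal{S}|<\text{non}(\mathcal{SN})$, so that the set $\mathcal{S}'$ of characteristic functions of members of $\mathcal{S}$ has strong measure zero. Apply Lemma \ref{smz and splitting} to obtain $Z\in[\omega]^\omega$ with $\overline{d}_Z(S)=1$ and $\underline{d}_Z(S)=0$ for every $S\in\mathcal{S}$. Then $d_Z(S)=\textsf{osc}$ for every $S\in\mathcal{S}$, so in particular $d_Z(S)\notin X$ for all $S\in\mathcal{S}$ (since $X\subseteq(0,1)$ consists of real values). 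This contradicts the defining property of $\mathcal{S}$ as a witness of $\mathfrak{s}_X$.

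For $\mathfrak{r}_X\leq \text{cov}(\mathcal{SN})$, I would build a reaping family of the required cardinality. Let $\{N_\alpha:\alpha<\text{cov}(\mathcal{SN})\}$ be a cover of $2^\omega$ by strong measure zero sets. For each $\alpha$, let $\mathcal{S}_\alpha$ be the family of infinite subsets of $\omega$ whose characteristic functions lie in $N_\alpha$; since the associated set of characteristic functions is a subset of $N_\alpha$, it has strong measure zero, so Lemma \ref{smz and splitting} produces some $Z_\alpha\in[\omega]^\omega$ with $\overline{d}_{Z_\alpha}(S)=1$ and $\underline{d}_{Z_\alpha}(S)=0$ for every $S\in\mathcal{S}_\alpha$. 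Set $\mathcal{R}=\{Z_\alpha:\alpha<\text{cov}(\mathcal{SN})\}$. Given an arbitrary infinite-coinfinite $S\subseteq\omega$, its characteristic function lies in some $N_\alpha$, hence $S\in\mathcal{S}_\alpha$, so $d_{Z_\alpha}(S)=\textsf{osc}\neq r$ for every $r\in X\subseteq(0,1)$. Therefore $\mathcal{R}$ is a witness of $\mathfrak{r}_X$.

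There is no real obstacle here; the entire work has already been done in Lemma \ref{smz and splitting}, so the proof is essentially a bookkeeping argument that (i) rephrases $X\subseteq(0,1)$ as excluding $\textsf{osc}$ from the admissible relative density values, and (ii) in the second inequality, partitions $2^\omega$ by the cover and selects the appropriate $Z_\alpha$ for each piece. The only minor point worth flagging is to confirm that Lemma \ref{smz and splitting} applies to a subfamily of a strong measure zero set (which it does, since strong measure zero is hereditary).
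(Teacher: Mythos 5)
Your proposal is correct and follows essentially the same route as the paper: both inequalities are read off from Lemma \ref{smz and splitting}, using that a witness of $\mathfrak{s}_X$ cannot be (coded by) a strong measure zero set and that the $Z_\alpha$'s obtained from a strong measure zero cover form an $X$-reaping family, since $\textsf{osc}\notin X\subseteq(0,1)$. Your remarks on hereditarity of strong measure zero and on passing to characteristic functions are just slightly more explicit bookkeeping than the paper's wording.
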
 
\begin{proof}
 Let $\mathcal{S}_X$ be a family of infinite-coinfinite subsets of $\omega$ witness of $\mathfrak{s}_X$. Note that this set does not have strong measure zero because \ref{smz and splitting} and $0, 1$ and $\textsf{osc}$ do not belong to $X$. Therefore, non$(\mathcal{SN}) \leq \mathfrak{s}_X$. To see the other inequality, consider a collection $\{N_\alpha\}_{\alpha<\kappa}$ of strong measure zero sets such that $\bigcup_{\alpha<\kappa}N_\alpha=[\omega]^\omega$. For each $N_\alpha$, consider the set $Z_\alpha$ given by \ref{smz and splitting} and define the family $\mathcal{R}_X:=\{Z_\alpha \,|\, \alpha<\kappa\}$. Therefore, given $S\in[\omega]^\omega$ there is $\alpha<\kappa$ such that $S\in N_\alpha$, so $\overline{d}_{Z_\alpha}(S)=1$ and $\underline{d}_{Z_\alpha}(S)=0$. Since $0, 1$ and $\textsf{osc}$ do not belong to $X$, we have that $\mathfrak{r}_X \leq$ cov$(\mathcal{SN})$. 
\end{proof}

\section{Relative density number} \label{section 2}

In this section, we prove that $\mathfrak{dd}_{(0,1),\{0,1\}}^{\textsf{rel}}$ and $\mathfrak{dd}_{\textsf{all},\textsf{all}}^{\textsf{rel}}$ are consistently different from $\mathfrak{c}$. We present a forcing notion $\mathbb{P}$ that adds a permutation $\pi$ of $\mathbb{N}$ and forces $d_{\pi(B)}(\pi(A))\in \{0,1\}$ for all pairs of sets $A\subseteq B \subseteq \omega$ in the ground model. This proof is similar to the proof of Con$(\mathfrak{rr}_i < \mathfrak{c})$ presented in \cite[Section 9]{blass2019rearrangementnumber}. As they do with the conditional convergent series, we are going to separate the collection of pairs of sets $A\subseteq B \subseteq \omega$ into two categories: one that will have relative density zero under the permutation, and another that will have relative density one. To do that, fix an enumeration $\langle A^\beta, B^\beta \, | \, \beta < \mathfrak{c} \rangle$  of the collection of pairs of sets $A\subseteq B \subseteq \omega$ such that $A$ and $B\setminus A$ are infinite. We construct an equivalence relation on the set $\mathfrak{c}$ of indices, and for each equivalence class we use its smallest ordinal number as a standard representative. In the notation of the following lemma, $\mathcal{A}$ will be the set of these representatives, and $\zeta$ will be the function sending each ordinal to the representative of its equivalence class.

\begin{lemma}\label{matrix}
Assume MA($\sigma$-centered). There exist a set $\mathcal{A} \subseteq \mathfrak{c}$, a function $\zeta: \mathfrak{c} \rightarrow \mathcal{A}$, and a matrix of sets $\langle X_\alpha^\beta : \alpha \in \mathcal{A}$ and $\alpha\leq \beta < \mathfrak{c} \rangle$ with the following properties for all $\beta < \mathfrak{c}$:
\begin{enumerate}
    \item $\zeta(\beta)\leq \beta$ with equality if and only if $\beta \in \mathcal{A}$.
    \item If $\alpha \in \mathcal{A}$ and $\alpha \leq \beta \leq \beta^\prime$, then $X_\alpha^{\beta^\prime} \subseteq^* X_\alpha^\beta$.
    \item The sets $X_\alpha^\beta$ for $\alpha \in \mathcal{A} \cap (\beta + 1)$ are almost disjoint. 
    \item $X_{\zeta(\beta)}^\beta$ is a subset of $A^\beta$ or of $B^\beta \setminus A^\beta$.
    \item If $\alpha\in \mathcal{A}$ and $\alpha< \zeta(\beta)$, then $X_\alpha^{\beta} \cap B^\beta =^* \emptyset$.
\end{enumerate}
\end{lemma}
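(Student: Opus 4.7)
The plan is a transfinite recursion on $\beta < \mathfrak{c}$. At stage $\beta$, I simultaneously decide whether $\beta$ enters $\mathcal{A}$, the value of $\zeta(\beta)$, and the sets $X_\alpha^\beta$ for every $\alpha \in \mathcal{A}\cap(\beta+1)$. The first action is a \emph{bookkeeping} step: for each $\alpha \in \mathcal{A}\cap\beta$ produce a ``current'' auxiliary set $Y_\alpha$. At a successor stage take $Y_\alpha = X_\alpha^{\beta-1}$; at a limit stage invoke MA($\sigma$-centered), which is equivalent to $\mathfrak{p}=\mathfrak{c}$, to pick a pseudo-intersection $Y_\alpha \subseteq^* X_\alpha^{\beta'}$ of the $\subseteq^*$-descending chain $\langle X_\alpha^{\beta'} : \alpha \leq \beta' < \beta\rangle$, whose length is below $\mathfrak{c}$. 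Almost disjointness of the family $(Y_\alpha)_{\alpha \in \mathcal{A}\cap\beta}$ is automatic: given two indices $\alpha,\alpha'$, choose $\beta'<\beta$ above both and note $Y_\alpha\cap Y_{\alpha'}\subseteq^* X_\alpha^{\beta'}\cap X_{\alpha'}^{\beta'}$, which is finite by property (3) at stage $\beta'$.

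Next, define $\zeta(\beta)$ to be the least $\alpha \in \mathcal{A}\cap\beta$ for which $Y_\alpha \cap B^\beta$ is infinite, if any such $\alpha$ exists; otherwise declare $\beta \in \mathcal{A}$ and put $\zeta(\beta)=\beta$. This is the only natural choice, since (5) forces $X_\alpha^\beta$ to almost miss $B^\beta$ for every $\alpha<\zeta(\beta)$. The sets $X_\alpha^\beta$ are then defined by cases: for $\alpha \in \mathcal{A}$ with $\alpha<\zeta(\beta)$ set $X_\alpha^\beta = Y_\alpha\setminus B^\beta$, which equals $Y_\alpha$ modulo finite by the minimality of $\zeta(\beta)$; for $\alpha \in \mathcal{A}$ with $\alpha>\zeta(\beta)$ simply set $X_\alpha^\beta = Y_\alpha$; for $\alpha=\zeta(\beta)<\beta$ take $X_{\zeta(\beta)}^\beta$ to be whichever of $Y_{\zeta(\beta)}\cap A^\beta$ and $Y_{\zeta(\beta)}\cap(B^\beta\setminus A^\beta)$ is infinite (at least one is, since $Y_{\zeta(\beta)}\cap B^\beta$ is infinite); and when $\zeta(\beta)=\beta$ put $X_\beta^\beta=A^\beta$.

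Verification of the five properties is then direct: (1) and (4) are built in; (2) follows from $X_\alpha^\beta\subseteq^* Y_\alpha\subseteq^* X_\alpha^{\beta'}$ for all $\alpha\leq \beta'<\beta$; and (5) comes from the first case of the definition. Property (3) reduces to the almost disjointness of the $Y_\alpha$'s together with a check at $\alpha=\zeta(\beta)$: for $\alpha<\zeta(\beta)$ the intersection is contained in $B^\beta \cap (Y_\alpha\setminus B^\beta)=\emptyset$, and for $\alpha>\zeta(\beta)$ both sets are essentially their $Y$'s. I expect the only real obstacle to be the limit-stage bookkeeping, where many pseudo-intersections $Y_\alpha$ must be chosen simultaneously while preserving both the descending-tower condition (2) and almost disjointness (3); this is precisely what MA($\sigma$-centered) is tailor-made for, since $\mathfrak{p}=\mathfrak{c}$ delivers each $Y_\alpha$ individually and the compatibility with the earlier conditions is automatic.
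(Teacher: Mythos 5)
Your construction is correct and follows essentially the same recursion as the paper's: pseudo-intersections $Y_\alpha$ at limit stages via MA($\sigma$-centered), a scan of the already-built columns to determine $\zeta(\beta)$ (or to put $\beta$ into $\mathcal{A}$), removal of $B^\beta$ from the columns below $\zeta(\beta)$, and splitting of the $\zeta(\beta)$-column into its $A^\beta$ or $B^\beta\setminus A^\beta$ part. The only (harmless) divergence is the stopping rule---you take $\zeta(\beta)$ to be the least $\alpha$ with $Y_\alpha\cap B^\beta$ infinite, whereas the paper successively replaces each column by $Y_\alpha\setminus B^\beta$ while this stays infinite and stops at the first $\alpha$ with $Y_\alpha\subseteq^* B^\beta$---but either rule yields all five clauses and keeps the columns infinite, which is what the later forcing argument actually uses.
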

\begin{proof}
We construct the matrix by recursion on ordinals $\beta < \mathfrak{c}$. In each step $\beta < \mathfrak{c}$, we are going to construct a set $\mathcal{A}_\beta \subseteq \beta + 1$. $\mathcal{A}$ will be $\bigcup_{\beta<\mathfrak{c}} \mathcal{A}_\beta$.

\begin{description}
    \item[Case $\beta=0$.] Let $\mathcal{A}_0:=\{0\}$ and set $\zeta(0)=0$. For $X_0^0$ choose either $A^0$ or $B^0 \setminus A^0$.
    \item[Case $\beta+1$.] Suppose we already constructed $\mathcal{A}_\beta$, the collection $\{X_\alpha^\beta \, : \, \alpha \in \mathcal{A}_\beta \}$, and defined $\zeta(\gamma)$ for $\gamma\leq \beta$. We proceed by recursion on $\alpha \in \mathcal{A}_\beta$. At step $\alpha$ of this recursion, we consider the set $X_\alpha^\beta \cap (\omega\setminus B^{\beta+1})$. If this set is infinite, then we declare it to be $X_\alpha^{\beta+1}$. Then we continue to the next value of $\alpha$. 
    
    If $X_\alpha^\beta \cap \omega\setminus B^{\beta+1}$ is finite, we stop the recursion on $\alpha$, we define $\zeta(\beta +1 )=\alpha$ and $\mathcal{A}_{\beta+1}=\mathcal{A}_\beta$. Since in this case $X_\alpha^\beta\subseteq^* B^{\beta+1}$, we set $X_\alpha^{\beta+1}$ either $X_\alpha^\beta \cap A^{\beta+1}$ or $X_\alpha^\beta \cap (B^{\beta+1}\setminus A^{\beta+1})$, whenever this set is infinite. For all $\gamma\in \mathcal{A}_\beta$ such that $\alpha<\gamma\leq \beta$ we define $X_\gamma^{\beta+1}:=X_\gamma^\beta$.  

    If the recursion on $\alpha$ does not stop, we define $\mathcal{A}_{\beta+1}:=\mathcal{A}_\beta \cup \{\beta+1\}$, $\zeta(\beta+1):=\beta+1$ and $X_{\beta+1}^{\beta+1}:=A^{\beta+1}$. Note that $X_\alpha^{\beta+1} \in [\omega\setminus B^{\beta+1}]^\omega$ for $\alpha\leq \beta$, so this set satisfies the fifth clause.

    \item[Case $\beta$ is a limit ordinal.]  For each $\alpha \in \mathcal{A}_\beta$ construct a pseudointersection $Y_\alpha$ of the collection $\{X_\alpha^\gamma \, : \, \gamma <\beta\}$. This is possible due to MA$(\sigma-$centered$)$. Now, proceed exactly as in the successor case, using $Y_\alpha$ in place of $X_\alpha^\beta$. 
\end{description}
\end{proof}

\begin{defn}
We say that an ordinal $\beta< \mathfrak{c}$ is a 1-ordinal if $X_{\zeta(\beta)}^\beta$ is a subset of $A^\beta$. Otherwise, we say that $\beta$ is a $0$-ordinal. We write $R(\beta)$ for $A^\beta$ when $\beta$ is a 1-ordinal and for $B^\beta \setminus A^\beta$ when $\beta$ is a 0-ordinal.    
\end{defn}

Let $\mathbb{P}$ be the following forcing. A condition is a triple $(s,F,k)$ such that
\begin{itemize}
    \item $s$ is an injective function with ran$(s)=n$ for some $n\in \mathbb{N}$,
    \item $F$ is a finite subset of $\mathfrak{c}$,
    \item $k\in \mathbb{N}$, and
    \item For all $\beta \in F$ we have
    \[
    \frac{|s[R(\beta)] \cap n |}{|s[B^\beta] \cap n |} > 1- \frac{1}{k} 
    \]
\end{itemize}
A condition $(s^\prime, F^\prime, k^\prime)$ extends $(s,F,k)$ if
\begin{itemize}
    \item $s^\prime \supseteq s$,
    \item $F^\prime \supseteq F$, 
    \item $k^\prime\geq k$, and
    \item for all $j \in $ ran$(s^\prime)\setminus$ ran$(s)$ and all $\beta \in F$ we have
    \[
    \frac{|s^\prime[R(\beta)] \cap j |}{|s^\prime[B^\beta] \cap j |} > 1- \frac{1}{k} 
    \]
\end{itemize}

\begin{lemma}\label{density of conditions}
Assume MA$(\sigma$-centered).
\begin{enumerate}
    \item $\mathbb{P}$ is $\sigma$-centered.
    \item  For every $l,a\in\mathbb{N}$, every condition $(s,F,k)$ has an extension $(s^\prime, F^\prime, k^\prime)$ with $k^\prime\geq l$ and $a \in$ dom$(s^\prime)$.
    \item For every $\gamma \in \mathfrak{c}$, every condition $(s,F,k)$ has an extension $(s^\prime, F^\prime, k^\prime)$ with $\gamma \in F^\prime$.
\end{enumerate}
\end{lemma}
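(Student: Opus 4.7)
For (1), observe that any two conditions $(s,F_1,k)$ and $(s,F_2,k)$ sharing the first and third coordinates admit the common extension $(s,F_1\cup F_2,k)$: this is itself a condition because each $\beta\in F_1\cup F_2$ inherits its density inequality at $j=|\mathrm{ran}(s)|$ from whichever original condition contained it. Since there are countably many pairs $(s,k)$, this decomposes $\mathbb{P}$ into countably many centered subsets.

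For (2) and (3) I would use the same basic idea: append to $s$ a carefully staged finite block of fresh domain elements. Set $\beta^{*}:=\max F$ in case (2), or $\max(F\cup\{\gamma\})$ in case (3), and $Y_{\beta}:=X_{\zeta(\beta)}^{\beta^{*}}$; by Lemma \ref{matrix}(2),(4) we have $Y_{\beta}\subseteq^{*}R(\beta)$. The key structural fact, obtained by combining Lemma \ref{matrix}(2) with clause (5), is
\[
\zeta(\beta')<\zeta(\beta)\ \Longrightarrow\ Y_{\beta'}\cap B^{\beta}=^{*}\emptyset,
\]
because $\zeta(\beta')<\zeta(\beta)\leq\beta$ forces $Y_{\beta'}\subseteq^{*}X_{\zeta(\beta')}^{\beta}$ and clause (5) then applies at $\beta$. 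So, relative to a fixed $\beta$, elements of $Y_{\beta'}$ are \emph{neutral} when $\zeta(\beta')<\zeta(\beta)$, \emph{good} (almost all in $R(\beta)$) when $\zeta(\beta')=\zeta(\beta)$, and uncontrolled -- potentially \emph{bad}, i.e., in $B^{\beta}\setminus R(\beta)$ -- only when $\zeta(\beta')>\zeta(\beta)$.

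Group the relevant ordinals into $\zeta$-equivalence classes $F_1,\dots,F_r$ with $\alpha_1<\dots<\alpha_r$, and process them in increasing $\alpha$-order. At stage $i$ pick $n_i$ fresh elements from $Y_{F_i}\cap\bigcap_{\beta\in F_i}R(\beta)$ (a cofinite subset of the infinite set $Y_{F_i}$) and send them to the next $n_i$ slots of the range. By the key fact these additions are good for every $\beta\in F_i$, neutral for every $\beta$ in a later-$\alpha$ class, and possibly bad for $\beta$ in an earlier-$\alpha$ class. Choose the $n_i$ by downward induction so that each $n_i$ dominates, say, $2l\cdot\sum_{i'>i}n_{i'}$ together with $\max_{\beta\in F_i}|s[B^{\beta}]\cap n|$ (and $|s[B^{\gamma}]\cap n|$ in case (3)). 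A direct calculation then yields: (a) the final ratio $|s'[R(\beta)]\cap n'|/|s'[B^{\beta}]\cap n'|$ exceeds $1-\tfrac{1}{l}$ for every $\beta\in F$ in case (2) and exceeds $1-\tfrac{1}{k}$ for every $\beta\in F\cup\{\gamma\}$ in case (3), where the $\gamma$-ratio, possibly tiny initially, is boosted by its own stage; (b) intermediate ratios at $\beta\in F$ stay above $1-\tfrac{1}{k}$, since before the stage containing $\beta$ the ratio is unchanged from the initial value $>1-\tfrac{1}{k}$, during that stage it only increases, and after that stage the worst monotonic drop reaches exactly the final ratio, which is well above $1-\tfrac{1}{k}$. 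In case (2), if $a\not\in\mathrm{dom}(s)$ I include $a$ in the first stage; the buffer built into the $n_i$ absorbs its single-element contribution.

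The principal obstacle is precisely the asymmetry in Lemma \ref{matrix}(5): $Y_{\beta}$ can intersect $B^{\beta'}$ unboundedly whenever $\zeta(\beta')<\zeta(\beta)$, so no single source of new elements is simultaneously safe for all relevant $\beta$. Processing the classes in increasing $\alpha$-order, combined with a geometric size schedule for the $n_i$, is what neutralises this asymmetry: potentially-destructive contributions always come \emph{after} the corresponding boosting stage, and are made negligible in comparison to it, which is exactly what is needed to control both the intermediate and the final density inequalities.
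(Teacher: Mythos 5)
Your argument for (1) and the overall architecture of (2)--(3) --- decomposing $F$ (resp.\ $F\cup\{\gamma\}$) into $\zeta$-classes, drawing the fresh elements for the class with value $\zeta_j$ from $X_{\zeta_j}^{\max F}\cap\bigcap\{R(\beta):\zeta(\beta)=\zeta_j\}$, placing the blocks in increasing $\zeta$-order, and fixing their sizes by a backward recursion so that each block dominates the total of all later (potentially bad) blocks together with $|s[B^\beta]\cap n|$ --- is essentially the paper's own proof, including the use of clauses (2)--(5) of Lemma \ref{matrix} in exactly the same way.

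There is, however, one concrete step that fails as written: the treatment of $a$ in part (2). Putting $a$ into the first stage and letting ``the buffer built into the $n_i$'' absorb it works for the \emph{final} inequality, but not for the \emph{intermediate} inequalities required for $(s',F',k')$ to be an extension of $(s,F,k)$. Indeed, if $a\in B^\beta\setminus R(\beta)$ for some $\beta\in F$ that has received no good elements before the slot of $a$ (e.g.\ any $\beta$ whose $\zeta$-class is processed after the stage containing $a$, since all earlier new elements are neutral for such $\beta$), then at the range position $j$ immediately above $a$'s slot one has $|s'[R(\beta)]\cap j|=|s[R(\beta)]\cap n|$ and $|s'[B^\beta]\cap j|=|s[B^\beta]\cap n|+1$, and this ratio can fall below $1-\tfrac1k$ when $|s[B^\beta]\cap n|$ is small: with $|s[R(\beta)]\cap n|=|s[B^\beta]\cap n|=2$ and $k=5$ the ratio drops from $1$ to $2/3<4/5$. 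No choice of the $n_i$ can repair this, because the dip occurs before $\beta$'s boosting block appears. The paper avoids the problem by sending $a$ to the \emph{last} slot of the new range, so that $a$ is counted only in the final ratio at $d=\mathrm{ran}(s')$, where the extra ``$+1$'' written into the size requirement for the $Z_j$ (against the relaxed threshold $1-\tfrac{1}{k+1}$) absorbs it; with that single change of placement your construction goes through and coincides with the paper's.
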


\begin{proof}
\begin{enumerate}
    \item Note that for all $s,k$ the set $D_{s,k}:=\{(s,F,k) \, | \, F$ is a finite subset of $\mathfrak{c}\}$ is centered. 
    \item It suffices to prove the result with $l=k+1$. Let $(s,F,k)$ be a condition with ran$(s)=n$. We will construct an injective function $s^\prime \supseteq s$ such that $a \in$ dom$(s^\prime)$ and $(s^\prime, F, k+1)$ is an extension of $(s,F,k)$. Suppose that $F\neq \emptyset$ and $a \not\in$ dom$(s)$ (otherwise the construction is trivial). Let $\delta$ be the largest element of $F$. Enumerate $\zeta[F]$ in increasing order as $\zeta_0 < \zeta_1 <...< \zeta_{m-1}$. By backward recursion on $j<m$, we are going to find finite sets $Z_j \subseteq \mathbb{N}$ such that
    \begin{itemize}
        \item the sets $Z_j$ are disjoint from each other and from dom$(s)$,
        \item $a\not\in Z_{j}$,
        \item $Z_j \subseteq X_{\zeta_j}^\delta \cap \bigcap \{ R(\beta) : \beta \in F \text{ and } \zeta(\beta)=\zeta_j \}$,
        \item for all $\beta \in F$ with $\zeta(\beta)>\zeta_j$ we have $Z_j \cap B^\beta = \emptyset$.
        \item for all $\beta \in F$ with $\zeta(\beta)=\zeta_j$ we have
        \[
         \frac{|s[R(\beta)] \cap n |+|Z_j|}{|s[B^\beta] \cap n |+\sum_{j^\prime \geq j}|Z_{j^\prime}|+1} > 1- \frac{1}{k+1}
        \]   
    \end{itemize}
    Note that we can construct the collection $\{Z_j : j<m\}$ because for $j<m$ the sets
        \[
        X_{\zeta_j}^\delta \cap \bigcap \{ R(\beta) : \beta \in F \text{ and } \zeta(\beta)=\zeta_j \}
        \]
        are infinite and almost disjoint thanks to clauses (2), (3) and (4) of lemma \ref{matrix}. Therefore, fix $j<m$ and suppose that the sets $Z_{j^\prime}$ are already constructed for $j<j^\prime<m$. By the above, we can find a finite set $Z_j$ disjoint from $Z_{j^\prime}$ for $j<j^\prime<m$, dom$(s)$ and $\{a\}$. Besides, by clause (5) of lemma \ref{matrix}, we can also ask that for all $\beta \in F$ with $\zeta(\beta)>\zeta_j$ we have $Z_j \cap B^\beta = \emptyset$. For the last requirement, note that 
        \[
        \lim_{l \to \infty} \frac{|s[R(\beta)] \cap n |+l}{|s[B^\beta] \cap n |+\sum_{j^\prime >j }|Z_{j^\prime}|+1+l} = 1,
        \]
        so choose $Z_j$ big enough to satisfy the last inequality. 

        Now, define $s^\prime$ as follows: the domain of $s^\prime$ is dom$(s) \cup \bigcup_{j<m} Z_j \cup \{a\}$ and
        \[
        s^\prime(i)=\left\{
        \begin{array}{ll}
           s(i)  & \text{ if $i\in$ dom$(s)$} \\
           \text{ran}(s) + \sum_{j^\prime<j}|Z_{j^\prime}|+t  & \text{ if $i$ is the $t^{th}$ element of $Z_j$} \\
           \text{ran}(s) + \sum_{j^\prime< m}|Z_{j^\prime}|  & \text{ $i=a$ }
        \end{array}\right.
        \]
        Let us see that $(s^\prime, F, k+1)$ is a condition of the forcing $\mathbb{P}$. Denote $\text{ran}(s) + \sum_{j^\prime< m}|Z_{j^\prime}|+1$ by $d$. Note that $d$ is the range of $s^\prime$. Let $\beta \in F$ be such that $\zeta(\beta)=\zeta_j$, then
        \[
        \begin{array}{cl}
        \displaystyle
            \frac{|s^\prime[R(\beta)] \cap d |}{|s^\prime[B^\beta] \cap d |} & 
             \displaystyle= \frac{|s[R(\beta)] \cap n |+\sum_{j^\prime<m}|R(\beta) \cap Z_{j^\prime}|+|R(\beta) \cap \{a\}|}{|s[B^\beta] \cap n |+\sum_{j^\prime<m}|B^\beta \cap Z_{j^\prime}|+|B^\beta \cap \{a\}|}   \\ \\
             &\displaystyle\geq \frac{|s[R(\beta)] \cap n |+|R(\beta) \cap Z_j|}{|s[B^\beta] \cap n |+\sum_{j^\prime\geq j}|B^\beta \cap Z_{j^\prime}|+|B^\beta \cap \{a\}|} \\ \\ 
             &\displaystyle\geq \frac{|s[R(\beta)] \cap n |+|Z_j|}{|s[B^\beta] \cap n |+\sum_{j^\prime\geq j}| Z_{j^\prime}|+1}  \\ \\
             &\displaystyle > 1- \frac{1}{k+1}
        \end{array} 
        \]
        where the first equality holds because the sets $Z_j$, dom$(s)$ and $\{a\}$ are disjoint from each other, the first inequality holds because there are fewer terms in the numerator and for all $j^\prime<j$ we have $Z_{j^\prime} \cap B^\beta = \emptyset$, and the second inequality holds because $Z_j\subseteq R(\beta)$ and the denominator is bigger. The last inequality holds by the last condition of the construction of $Z_j$.

        Now, let us see that $(s^\prime, F, k+1)$ is an extension of $(s, F, k)$. Since $s^\prime$ is an extension of $s$, we only need to prove that for all $l \in $ ran$(s^\prime)\setminus$ ran$(s)$ and all $\beta \in F$ we have
        \[
         \frac{|s^\prime[R(\beta)] \cap l |}{|s^\prime[B^\beta] \cap l |} > 1- \frac{1}{k} 
        \]
        Fix $l\in \mathbb{N}$ such that $n\leq l < d$, and $\beta \in F$ such that $\zeta(\beta)=\zeta_j$. Then, there is $h<m$, and $i\in Z_h$ such that $i$ is the $t^{th}$ element of $Z_h$ and $s^\prime(i)=l$. Denote the first $t$ elements of $Z_h$ by $Z_h^t$. Therefore 
        \[
        \frac{|s^\prime[R(\beta)] \cap l |}{|s^\prime[B^\beta] \cap l |} \displaystyle= \frac{|s[R(\beta)] \cap n |+\sum_{j^\prime<h}|R(\beta) \cap Z_{j^\prime}|+|R(\beta) \cap Z_h^t|}{|s[B^\beta] \cap n |+\sum_{j^\prime<h}|B^\beta \cap Z_{j^\prime}|+|B^\beta \cap Z_h^t| }.
        \]
        If $h<j$, we have that  $\frac{|s^\prime[R(\beta)] \cap l |}{|s^\prime[B^\beta] \cap l |}= \frac{|s[R(\beta)] \cap n |}{|s[B^\beta] \cap n |} > 1 - 1/k$ since $B^\beta \cap Z_{j^\prime} = \emptyset$ for $j^\prime<j$. If $h=j$, then $Z_h^t \subseteq R(\beta)$, so
        \[
        \frac{|s^\prime[R(\beta)] \cap l |}{|s^\prime[B^\beta] \cap l |} \displaystyle = \frac{|s[R(\beta)] \cap n |+|R(\beta) \cap Z_h^t|}{|s[B^\beta] \cap n |+|B^\beta \cap Z_h^t| }= \frac{|s[R(\beta)] \cap n |+t}{|s[B^\beta] \cap n |+t }>1-\frac{1}{k}.
        \]
        Finally, if $j<h$ we have 
        \[
        \frac{|s^\prime[R(\beta)] \cap l |}{|s^\prime[B^\beta] \cap l |} \displaystyle \geq \frac{|s[R(\beta)] \cap n |+|Z_j|}{|s[B^\beta] \cap n |+\sum_{j^\prime\geq j}| Z_{j^\prime}|+1} > 1 - \frac{1}{k+1}.
        \]

    \item Let $(s,F,k)$ be a condition and let $\gamma \in \mathfrak{c}$. We will construct an injective function $s^\prime \supseteq s$ such that $(s^\prime, F \cup \{\gamma\}, k)$ is an extension of $(s,F,k)$. Suppose that $\gamma \not\in F$ (otherwise the construction is trivial). Let $\delta$ be the largest element of $F\cup \{\gamma\}$. Enumerate the set $\{ \zeta \in \zeta[F] \, | \, \zeta < \zeta(\gamma)\}$ in increasing order as $\zeta_0 < \zeta_1 <...< \zeta_{m-1}$. Denote $\zeta(\gamma)$ as $\zeta_m$. Therefore $\zeta_0 < \zeta_1 <...< \zeta_{m-1}<\zeta_m$. By backward recursion on $j\leq m$, we are going to find finite sets $Z_j \subseteq \mathbb{N}$ such that
    \begin{itemize}
        \item the sets $Z_j$ are disjoint from each other and from dom$(s)$,
        \item $Z_j \subseteq X_{\zeta_j}^\delta \cap \bigcap \{ R(\beta) : \beta \in F \cup \{\gamma\} \text{ and } \zeta(\beta)=\zeta_j \}$,
        \item for all $\beta \in F \cup \{\gamma\}$ with $\zeta(\beta)>\zeta_j$ we have $Z_j \cap B^\beta = \emptyset$.
        \item for all $\beta \in F\cup \{\gamma\}$ with $\zeta(\beta)=\zeta_j$ we have
        \[
         \frac{|s[R(\beta)] \cap n |+|Z_j|}{|s[B^\beta] \cap n |+\sum_{j^\prime \geq j}|Z_{j^\prime}|} > 1- \frac{1}{k}
        \]
    \end{itemize}
    The same argument as in the proof of part (2) yields the existence
    of sets $Z_j$ satisfying these requirements. Define $s^\prime$ as follows: the domain of $s^\prime$ is dom$(s) \cup \bigcup_{j\leq m} Z_j $ and
        \[
        s^\prime(i)=\left\{
        \begin{array}{ll}
           s(i)  & \text{ if $i\in$ dom$(s)$} \\
           \text{ran}(s) + \sum_{j^\prime<j}|Z_{j^\prime}|+t  & \text{ if $i$ is the $t^{th}$ element of $Z_j$}
        \end{array}\right.
        \]
    Let us see that $(s^\prime, F \cup \{\gamma\}, k)$ is a condition of the forcing $\mathbb{P}$. Denote $\text{ran}(s) + \sum_{j^\prime\leq m}|Z_{j^\prime}|+1$ by $d$. Note that $d$ is the range of $s^\prime$. Let $\beta \in F \cup \{\gamma\}$. Then 
    \[
     \frac{|s^\prime[R(\beta)] \cap d |}{|s^\prime[B^\beta] \cap d |} = \frac{|s[R(\beta)] \cap n |+\sum_{j^\prime\leq m}|R(\beta) \cap Z_{j^\prime}|}{|s[B^\beta] \cap n |+\sum_{j^\prime\leq m}|B^\beta \cap Z_{j^\prime}|}
     \]
    If $\zeta(\beta)>\zeta(\gamma)$, we have  $\frac{|s^\prime[R(\beta)] \cap d |}{|s^\prime[B^\beta] \cap d |} = \frac{|s[R(\beta)] \cap n |}{|s[B^\beta] \cap n |} > 1- \frac{1}{k} $ since $B^\beta \cap Z_j^\prime= \emptyset$ for $j^\prime \leq m$. If $\zeta(\beta)=\zeta_j$ for some $j\leq m$  we have
    \[
        \begin{array}{cl}
        \displaystyle
            \frac{|s^\prime[R(\beta)] \cap d |}{|s^\prime[B^\beta] \cap d |} 
             &\displaystyle\geq \frac{|s[R(\beta)] \cap n |+|R(\beta) \cap Z_j|}{|s[B^\beta] \cap n |+\sum_{j^\prime\geq j}|B^\beta \cap Z_{j^\prime}|} \\ \\ 
             &\displaystyle\geq \frac{|s[R(\beta)] \cap n |+|Z_j|}{|s[B^\beta] \cap n |+\sum_{j^\prime\geq j}| Z_{j^\prime}|}  \\ \\
             &\displaystyle > 1- \frac{1}{k}
        \end{array} 
     \]
     where the first inequality holds because there are fewer terms in the numerator and for all $j^\prime<j$ we have $Z_{j^\prime} \cap B^\beta = \emptyset$, and the second inequality holds because $Z_j\subseteq R(\beta)$ and the denominator is bigger. The last inequality holds by the last condition of the construction of $Z_j$.

    The same argument as in the proof of part (2) proves that $(s^\prime, F \cup \{\gamma\}, k)$ is an extension of $(s,F,k)$.   
\end{enumerate}    
\end{proof}

\begin{cor}\label{effect of P}
Assume MA($\sigma$-centered), and let $G$ be a $\mathbb{P}$-generic filter. Let $\pi:=\bigcup \{s:(s,F,k)\in G\}$. Therefore, $\mathbb{P}$ forces that $\pi$ is a permutation of $\mathbb{N}$ and for all pairs of sets $A\subseteq B \subseteq \omega$ we have $d_{\pi(B)}(\pi(A))\in \{0,1\}$.
\end{cor}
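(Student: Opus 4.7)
The plan is to split the proof into (i) verifying that $\pi$ is a permutation of $\mathbb{N}$, and (ii) showing the relative-density claim for each enumerated pair $(A^\gamma, B^\gamma)$ by meeting suitable dense sets. For (i), the partial functions $\{s:(s,F,k)\in G\}$ are pairwise compatible injections because $G$ is a filter, so $\pi$ will be an injective partial function. I then invoke part~(2) of Lemma \ref{density of conditions}: for each $a\in\mathbb{N}$ the set $\{(s,F,k):a\in\mathrm{dom}(s)\}$ is dense, so the generic $G$ meets it and $\mathrm{dom}(\pi)=\mathbb{N}$. Since every $\mathrm{ran}(s)$ is an initial segment of the same cardinality as $\mathrm{dom}(s)$, we also get $\mathrm{ran}(\pi)=\mathbb{N}$, so $\pi$ is a permutation.

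For (ii), fix $\gamma<\mathfrak{c}$ and $k\in\mathbb{N}$ and consider
\[
D_{\gamma,k}:=\{(s,F,l)\in\mathbb{P}:\gamma\in F\text{ and }l\geq k\}.
\]
The first step is to show $D_{\gamma,k}$ is dense: given any condition, apply part~(3) of Lemma \ref{density of conditions} to drive $\gamma$ into the middle coordinate, then part~(2) to raise the last coordinate above $k$, noting that extensions can only enlarge $F$. By genericity pick $(s_k,F_k,l_k)\in G\cap D_{\gamma,k}$ and set $n_k:=\mathrm{ran}(s_k)$. For every $j\geq n_k$, part~(2) again provides a further extension $(t,H,m)\in G$ of $(s_k,F_k,l_k)$ with $\mathrm{ran}(t)>j$. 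The condition $(s_k, F_k, l_k)$ itself yields the target inequality at $j=n_k$, and the extension clause applied to $\beta=\gamma\in F_k$ yields it for every $j\in\mathrm{ran}(t)\setminus\mathrm{ran}(s_k)$; since $\pi$ agrees with $t$ on $\mathrm{dom}(t)$ and any element of the image below $j\leq\mathrm{ran}(t)$ is already produced by $t$, I obtain
\[
\frac{|\pi[R(\gamma)]\cap j|}{|\pi[B^\gamma]\cap j|}>1-\frac{1}{l_k}\geq 1-\frac{1}{k}.
\]
Because $B^\gamma$ is infinite and $\pi$ is a bijection, $|\pi[B^\gamma]\cap j|\to\infty$; letting $k\to\infty$ will give $d_{\pi[B^\gamma]}(\pi[R(\gamma)])=1$.

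To finish, I split on the type of $\gamma$: if $\gamma$ is a $1$-ordinal then $R(\gamma)=A^\gamma$ and the bound reads $d_{\pi[B^\gamma]}(\pi[A^\gamma])=1$; if $\gamma$ is a $0$-ordinal then $R(\gamma)=B^\gamma\setminus A^\gamma$, so injectivity of $\pi$ gives the disjoint decomposition $\pi[B^\gamma]=\pi[A^\gamma]\sqcup\pi[B^\gamma\setminus A^\gamma]$ and hence $d_{\pi[B^\gamma]}(\pi[A^\gamma])=0$. Either way the relative density lies in $\{0,1\}$. I expect the main delicate step to be the dense-set argument combining parts~(2) and~(3) of Lemma \ref{density of conditions} together with the bookkeeping that propagates the uniform lower bound from finite conditions to the generic permutation $\pi$; once that is in place, the remaining case split on $0$- and $1$-ordinals is routine.
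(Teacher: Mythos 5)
Your proposal is correct and follows essentially the same route as the paper: the paper's proof is just a brief sketch citing parts (2) and (3) of Lemma \ref{density of conditions} together with the definition of $\mathbb{P}$, and your density/genericity argument (meeting $D_{\gamma,k}$, propagating the extension-clause inequality to $\pi$ since initial segments of the range are decided, then splitting on $0$- versus $1$-ordinals via $R(\gamma)$) is exactly the intended filling-in of that sketch.
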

\begin{proof}
Using a density argument, part (2) of Lemma \ref{density of conditions} ensures that $\pi$ is a permutation of $\mathbb{N}$. In addition, part (3) of Lemma \ref{density of conditions} and the definition of $\mathbb{P}$ guarantee that for all pairs of sets $A\subseteq B \subseteq \omega$ with $d_B(A)\in (0,1)$ we have $d_{\pi(B)}(\pi(A))\in \{0,1\}$.      
\end{proof}

\begin{theorem}\label{consistency1}
It is consistent with ZFC that $\mathfrak{dd}_{(0,1),\{0,1\}}^{\textsf{rel}} = \aleph_1 < \mathfrak{c}$.
\end{theorem}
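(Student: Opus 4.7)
The plan is to build a model of $\mathfrak{dd}^{\mathsf{rel}}_{(0,1),\{0,1\}} = \aleph_1 < \mathfrak{c}$ by performing a finite-support iteration of the forcing $\mathbb{P}$ of length $\omega_1$ over a base satisfying MA($\sigma$-centered) $+\ \mathfrak{c} = \aleph_2$, in direct analogy with the consistency proof of $\mathfrak{rr}_i < \mathfrak{c}$ in \cite{blass2019rearrangementnumber}.

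\textbf{Base model.} First I would fix $V_0 \models \mathrm{MA}(\sigma\text{-centered}) + \mathfrak{c} = \aleph_2$, obtained by the standard finite-support iteration of length $\omega_2$ of all $\sigma$-centered posets of size $<\aleph_2$ over a ground model of GCH. In $V_0$ the hypotheses of Lemmas \ref{matrix}--\ref{density of conditions} and Corollary \ref{effect of P} are all available.

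\textbf{Iteration.} Inside $V_0$, I would build a finite-support iteration $\langle \mathbb{P}_\alpha, \dot{\mathbb{Q}}_\alpha \mid \alpha < \omega_1 \rangle$, where $\dot{\mathbb{Q}}_\alpha$ is a $\mathbb{P}_\alpha$-name for the forcing $\mathbb{P}$ as constructed inside the intermediate model $V_0^{\mathbb{P}_\alpha}$ (using an enumeration of the pairs $\langle A^\beta, B^\beta \rangle$ available there). By Lemma \ref{density of conditions}(1) each $\dot{\mathbb{Q}}_\alpha$ is $\sigma$-centered, so the whole iteration $\mathbb{P}_{\omega_1}$ is $\sigma$-centered (hence ccc) and has cardinality $\aleph_2$; in particular $\mathfrak{c} = \aleph_2$ survives in $V_0^{\mathbb{P}_{\omega_1}}$.

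\textbf{Verification.} Write $\pi_\alpha$ for the generic permutation added by $\dot{\mathbb{Q}}_\alpha$. Since $\mathbb{P}_{\omega_1}$ is ccc and the iteration has length $\omega_1$, every real of $V_0^{\mathbb{P}_{\omega_1}}$ already lies in $V_0^{\mathbb{P}_\beta}$ for some $\beta < \omega_1$; consequently every pair $(A,B)$ with $A \subseteq B \subseteq \omega$ infinite-coinfinite and $d_B(A) \in (0,1)$ appears by some stage $\beta$. Applying Corollary \ref{effect of P} inside $V_0^{\mathbb{P}_\beta}$ to the generic for $\dot{\mathbb{Q}}_\beta$ gives $d_{\pi_\beta[B]}(\pi_\beta[A]) \in \{0,1\}$, which is automatically different from $d_B(A)$. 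Hence $\{\pi_\alpha \mid \alpha < \omega_1\}$ witnesses $\mathfrak{dd}^{\mathsf{rel}}_{(0,1),\{0,1\}} \leq \aleph_1$ in the final extension, while $\mathfrak{c} = \aleph_2$ holds there.

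\textbf{Main obstacle.} The delicate step is verifying that the hypothesis of Lemma \ref{matrix}, namely MA($\sigma$-centered), persists in every intermediate extension $V_0^{\mathbb{P}_\alpha}$ for $\alpha < \omega_1$; otherwise $\dot{\mathbb{Q}}_\alpha$ cannot be defined. Equivalently, one must show that $\mathfrak{p} = \mathfrak{c} = \aleph_2$ is preserved by each $\mathbb{P}_\alpha$. This is the kind of statement handled by the standard preservation theorems for finite-support iterations of $\sigma$-centered forcings of length $< \mathfrak{c}$, but it is the one spot in the plan that requires genuine technical care; all the remaining content of the theorem is bookkeeping with the matrix and the forcing $\mathbb{P}$ already set up in the excerpt.
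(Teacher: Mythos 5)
Your overall architecture (length-$\omega_1$ finite-support ccc iteration, reals appearing at bounded stages, Corollary \ref{effect of P} applied stagewise) matches the paper, but the point you flag as the ``main obstacle'' is exactly where your plan has a genuine gap, and it is precisely the point the paper's proof is engineered to avoid. You force MA($\sigma$-centered) only once, in the base model $V_0$, and then iterate $\mathbb{P}$ alone; to define $\dot{\mathbb{Q}}_\alpha$ at stage $\alpha$ you need Lemma \ref{matrix}, hence MA($\sigma$-centered) (equivalently $\mathfrak{p}=\mathfrak{c}$, needed there to take pseudointersections of $\subseteq^*$-decreasing chains of length $<\mathfrak{c}$) to hold in $V_0^{\mathbb{P}_\alpha}$ for every $\alpha<\omega_1$. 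The ``standard preservation theorems for finite-support iterations of $\sigma$-centered forcings'' do not give this: they give things like ``an FS iteration of $\sigma$-centered posets of length $<\mathfrak{c}^+$ is $\sigma$-centered'' (which is what the paper's final corollary uses for non$(\mathcal{N})$), but they say nothing about preserving $\mathfrak{p}=\mathfrak{c}$ in intermediate extensions. Preservation of MA-type statements under ccc (even $\sigma$-centered) forcing is in general false --- note in particular that at every countable limit stage your iteration adds Cohen reals, and ``Cohen forcing preserves $\mathfrak{p}>\aleph_1$'' is not a fact you can cite; likewise nothing is known about $\mathbb{P}$ itself preserving $\mathfrak{p}=\mathfrak{c}$. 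So as written, $\dot{\mathbb{Q}}_\alpha$ may simply fail to be definable at some stage, and no argument is offered to rule this out.

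The paper's fix is simple and you should adopt it: make each iterand a two-step forcing $\mathbb{Q}$ which \emph{first} forces MA($\sigma$-centered) (by the usual $\sigma$-centered iteration, so $\mathbb{Q}$ is still $\sigma$-centered and ccc) and \emph{then} forces with $\mathbb{P}$ as defined in that intermediate model, and iterate $\mathbb{Q}$ with finite support for $\omega_1$ steps over any model of $\mathfrak{c}>\aleph_1$. This re-establishes the hypothesis of Lemma \ref{matrix} immediately before every use of $\mathbb{P}$, eliminating the preservation problem entirely; the rest of your verification (ccc, reals caught at stages $\beta<\omega_1$, $d_{\pi_\beta[B]}(\pi_\beta[A])\in\{0,1\}\neq d_B(A)$ for $d_B(A)\in(0,1)$, and $\mathfrak{c}$ staying large) then goes through as in Theorem \ref{consistency1}.
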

\begin{proof}
Let $\mathbb{Q}$ be a two-step iteration forcing in which the first step forces MA($\sigma$-centered), and the second step is $\mathbb{P}$. Consider a model $V$ where $\mathfrak{c}>\aleph_1$, and let $\mathbb{Q}_{\aleph_1}$ be the finite support iteration of $\mathbb{Q}$ of length $\omega_1$. By \ref{effect of P}, we have $V^{\mathbb{Q}_{\aleph_1}}\models \mathfrak{dd}_{(0,1),\{0,1\}}^{\textsf{rel}} = \aleph_1 < \mathfrak{c}$ because, thanks to the countable chain condition, every pair of sets $A\subseteq B \subseteq \omega$ appears at some intermediate stage of the iteration. 
\end{proof}

The previous theorem has a relation with \cite[Question 51]{brech2024densitycardinals} since there is an analogy between $\mathfrak{dd}_{(0,1),\{0,1\}}^{\textsf{rel}}$ and $\mathfrak{rr}_i$. It is not known if these cardinals are equal. 

\begin{theorem}\label{consistnecy2}
It is consistent with ZFC that $\mathfrak{dd}_{\textsf{all},\textsf{all}}^{\textsf{rel}} = \aleph_1 < \mathfrak{c}$.   
\end{theorem}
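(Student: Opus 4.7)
The strategy closely follows Theorem \ref{consistency1}, with modifications to handle all pairs $A\subseteq B \subseteq \omega$ (with $A$ and $B\setminus A$ infinite) rather than only those with $d_B(A)\in(0,1)$. First, I would enumerate all such pairs as $\langle (A^\beta, B^\beta) : \beta < \mathfrak{c}\rangle$ and, for each $\beta$, designate a target side
\[
R(\beta) := \begin{cases} A^\beta & \text{if } d_{B^\beta}(A^\beta) \neq 1,\\ B^\beta \setminus A^\beta & \text{if } d_{B^\beta}(A^\beta) = 1.\end{cases}
\]
If a permutation $\pi$ achieves $d_{\pi[B^\beta]}(\pi[R(\beta)]) = 1$, then $d_{\pi[B^\beta]}(\pi[A^\beta]) \in \{0, 1\}$ necessarily differs from $d_{B^\beta}(A^\beta)$ in every case: when $d_B(A)\in\{0\}\cup(0,1)\cup\{\textsf{osc}\}$ the new value is $1$, while when $d_B(A)=1$ it is $0$.

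The principal technical task is a refined version of Lemma \ref{matrix} that also guarantees $X_{\zeta(\beta)}^\beta \subseteq R(\beta)$. I would follow the construction of Lemma \ref{matrix}, but at the successor stage $\beta+1$ the recursion on $\alpha \in \mathcal{A}_\beta$ stops only when $X_\alpha^\beta \subseteq^* B^{\beta+1}$ \emph{and} $X_\alpha^\beta \cap R(\beta+1)$ is infinite. If no such $\alpha$ arises, I add $\beta+1$ to $\mathcal{A}_{\beta+1}$ and use MA($\sigma$-centered) to produce an infinite $X_{\beta+1}^{\beta+1} \subseteq R(\beta+1)$ almost disjoint from every previously defined $X_\alpha^{\beta+1}$, obtained as a pseudointersection of the appropriate family of cofinite-in-$R(\beta+1)$ complements.

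With this refined matrix, the forcing $\mathbb{P}^\star$ is defined exactly as $\mathbb{P}$, using the new $R(\beta)$: conditions are triples $(s, F, k)$ with $|s[R(\beta)] \cap n|/|s[B^\beta] \cap n| > 1 - 1/k$ for every $\beta \in F$. The $\sigma$-centered property and the analogues of Lemma \ref{density of conditions} transfer verbatim from Theorem \ref{consistency1}, since the refined matrix continues to support the almost-disjointness estimates used in the density arguments. One then performs a finite-support $\omega_1$-iteration of the two-step forcing MA($\sigma$-centered) followed by $\mathbb{P}^\star$, starting from a ground model with $\mathfrak{c} > \aleph_1$. By the ccc, every pair $(A,B)$ in the final extension appears at some intermediate stage, and the corresponding generic permutation witnesses a change in $d_B(A)$; hence $\mathfrak{dd}_{\textsf{all},\textsf{all}}^{\textsf{rel}} \leq \aleph_1 < \mathfrak{c}$.

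The main obstacle is the refined matrix construction: when no $\alpha \in \mathcal{A}_\beta$ is suitable and we must create the new class $\{\beta+1\}$, we must produce an infinite $X_{\beta+1}^{\beta+1} \subseteq R(\beta+1)$ almost disjoint from the already-chosen sets \emph{and} reconcile this with property (5) for those ``unusable'' $\alpha$ where $X_\alpha^\beta \subseteq^* B^{\beta+1}\setminus R(\beta+1)$. The likely resolution is to weaken property (5) to require $X_\alpha^\beta \cap R(\beta) =^* \emptyset$ (rather than $X_\alpha^\beta \cap B^\beta =^* \emptyset$) for $\alpha < \zeta(\beta)$, with a matching adjustment of the density estimate in Lemma \ref{density of conditions} (the extra contribution of $Z_{j'}$ to $|s[B^\beta]\cap n|$ is controlled because $Z_{j'} \cap R(\beta) = \emptyset$ for $j'<j$). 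Once this bookkeeping is carried out, the rest of the proof mirrors Theorem \ref{consistency1}.
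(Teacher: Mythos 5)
Your plan founders at the very first step: the a priori assignment of the target side $R(\beta)$ is self-contradictory, so the modified poset $\mathbb{P}^\star$ cannot do what you ask of it. Take any $B$ and $A\subseteq B$ with $A$, $B\setminus A$ infinite and $d_B(A)\in(0,1)$ or $d_B(A)=\textsf{osc}$. Then also $d_B(B\setminus A)\neq 1$, so your rule assigns $R=A$ to the pair $(A,B)$ and $R=B\setminus A$ to the pair $(B\setminus A,B)$, and you are demanding a single permutation with $d_{\pi[B]}(\pi[A])=1$ and $d_{\pi[B]}(\pi[B\setminus A])=1$ simultaneously — impossible, since $\frac{|\pi[A]\cap\pi[B]\cap n|}{|\pi[B]\cap n|}+\frac{|\pi[B\setminus A]\cap\pi[B]\cap n|}{|\pi[B]\cap n|}=1$. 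Concretely, no condition of $\mathbb{P}^\star$ with $k\geq 2$ can contain both indices in its finite set $F$, so the analogue of Lemma \ref{density of conditions}(3) fails and the generic permutation does not handle all ground-model pairs. This is exactly why, in the paper, which side becomes $R(\beta)$ is an \emph{output} of the construction in Lemma \ref{matrix} (the side containing the class's set $X_{\zeta(\beta)}^\beta$), which automatically coordinates complementary and interlocking pairs; it cannot be prescribed in advance, and consequently $\mathbb{P}$ only guarantees $d_{\pi[B]}(\pi[A])\in\{0,1\}$, not a prechosen one of the two values. Your fallback of weakening clause (5) to $X_\alpha^\beta\cap R(\beta)=^*\emptyset$ makes matters worse rather than better: blocks $Z_{j'}$ attached to smaller representatives may then sit inside $B^\beta\setminus R(\beta)$, inflating the denominator $|s'[B^\beta]\cap\,\cdot\,|$ without touching the numerator; since in the backward recursion those $Z_{j'}$ are chosen \emph{after} $Z_j$ and must themselves dominate $\sum_{j''\geq j'}|Z_{j''}|$, the ratio for $\beta$ can be driven arbitrarily close to $0$, both at intermediate stages and at the end. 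The full strength of clause (5) — lower blocks miss $B^\beta$ entirely — is what the extension argument lives on.

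The paper's proof takes a different and much shorter route that sidesteps this coordination problem entirely: it leaves $\mathbb{P}$ untouched (by Corollary \ref{effect of P} its generic $\pi$ makes $d_{\pi[B]}(\pi[A])\in\{0,1\}$ for \emph{every} ground-model pair, hence changes the value whenever $d_B(A)\in(0,1)$ or $d_B(A)=\textsf{osc}$) and adds a third iterand, Cohen forcing, whose generic permutation $\sigma_{\mathbb{C}}$ makes $d_{\sigma_{\mathbb{C}}[B]}(\sigma_{\mathbb{C}}[A])=\textsf{osc}$ for every ground-model pair, hence changes the value whenever $d_B(A)\in[0,1]$, in particular when it is $0$ or $1$. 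A finite-support $\omega_1$-iteration of (MA($\sigma$-centered) followed by $\mathbb{P}$ followed by $\mathbb{C}$) over a model of $\mathfrak{c}>\aleph_1$ then yields $\mathfrak{dd}_{\textsf{all},\textsf{all}}^{\textsf{rel}}=\aleph_1<\mathfrak{c}$ by the usual ccc catch-up argument. If you want to repair your approach, you would have to let the matrix dictate the sides as in the paper and then add a separate device (such as the Cohen step) for the pairs whose original relative density is $0$ or $1$ — which is precisely the paper's proof.
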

\begin{proof}
Let $\mathbb{Q}$ be a three-step iteration forcing in which the first step forces MA($\sigma$-centered), the second step is $\mathbb{P}$, and the third step is Cohen forcing $\mathbb{C}$. Consider a model $V$ where $\mathfrak{c}>\aleph_1$, and let $\mathbb{Q}_{\aleph_1}$ be the finite support iteration of $\mathbb{Q}$ of length $\omega_1$. It is easy to check that the Cohen forcing adds a permutation $\sigma_\mathbb{C}$ such that $d_{\sigma_\mathbb{C}(B)}(\sigma_\mathbb{C}(A))= \textsf{osc}$ for every pair of sets $A\subseteq B \subseteq \omega$ with $A$ and $B \setminus A$ both infinite in the ground model. Therefore, using the previous mentioned and \ref{effect of P} it can be shown that $V^{\mathbb{Q}_{\aleph_1}}\models \mathfrak{dd}_{\textsf{all}, \textsf{all}}^{\textsf{rel}} = \aleph_1 < \mathfrak{c}$.
\end{proof}

If we pay attention to the details, we can get a stronger result. 

\begin{cor}
Con($\mathfrak{dd}_{(0,1),\{0,1\}}^{\textsf{rel}} <$ non$(\mathcal{N})$) and Con($\mathfrak{dd}_{\textsf{all}, \textsf{all}}^{\textsf{rel}}<$ non$(\mathcal{N})$)
\end{cor}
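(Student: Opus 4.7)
The plan is to redo the iterations of Theorems \ref{consistency1} and \ref{consistnecy2} starting from a ground model in which non$(\mathcal{N})$ already exceeds $\aleph_1$, and to observe that the iteration preserves this fact.

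For the ground model, I would take $V \models$ MA $+\,2^{\aleph_0}=2^{\aleph_1}=\aleph_2$, which gives $V \models$ non$(\mathcal{N})=\aleph_2$, since MA implies cov$(\mathcal{M})=\mathfrak{c}$ and cov$(\mathcal{M}) \leq$ non$(\mathcal{N})$ is a standard Cicho\'n-diagram inequality. The cardinal arithmetic assumption $2^{\aleph_1}=\aleph_2$ is used only for size bookkeeping so that the full iteration has cardinality $\aleph_2$ and hence preserves $\mathfrak{c}=\aleph_2$.

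Next I would verify that the iteration $\mathbb{Q}_{\aleph_1}$ used in both theorems is $\sigma$-centered: each iterand $\mathbb{Q}$ is a finite composition of $\sigma$-centered pieces (the standard poset that forces MA$(\sigma$-centered$)$ is a $\sigma$-centered finite-support iteration, $\mathbb{P}$ is $\sigma$-centered by Lemma \ref{density of conditions}(1), and Cohen forcing is $\sigma$-centered), and a finite-support iteration of $\sigma$-centered forcings of length $\leq \mathfrak{c}$ remains $\sigma$-centered. I would then invoke the standard preservation result that $\sigma$-centered forcings preserve the inequality non$(\mathcal{N}) \geq \aleph_2$ (see, e.g., Bartoszy\'nski and Judah, \emph{Set theory: on the structure of the real line}). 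The underlying mechanism is that $\sigma$-centered forcings do not add random reals, so every real in $V[G]$ lies in a Borel null set coded in $V$, and the preservation of ground-model outer measure blocks the appearance of new size-$\aleph_1$ non-null sets in the extension. Combining this with Theorems \ref{consistency1} and \ref{consistnecy2} yields $V^{\mathbb{Q}_{\aleph_1}} \models \mathfrak{dd}_{(0,1),\{0,1\}}^{\textsf{rel}} = \mathfrak{dd}_{\textsf{all},\textsf{all}}^{\textsf{rel}} = \aleph_1 < \aleph_2 \leq$ non$(\mathcal{N})$, establishing both consistency statements.

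The main obstacle is carrying the preservation of non$(\mathcal{N})$ through the full $\omega_1$-length finite-support iteration rather than for a single step: one must verify that the preservation property survives at countable-cofinality limit stages of the iteration, which is where the bulk of the technical work lies. Any other path to the same conclusion would likely go through a more delicate combinatorial argument showing directly that in $V^{\mathbb{Q}_{\aleph_1}}$ every size-$\aleph_1$ set of reals can be covered by a ground-model-coded union of null Borel sets whose total measure is zero.
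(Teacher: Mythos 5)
Your overall plan is the same as the paper's: note that the full finite-support iterations of Theorems \ref{consistency1} and \ref{consistnecy2} are $\sigma$-centered (each block is a finite composition of $\sigma$-centered posets and the total length is $<\mathfrak{c}^+$), start over a ground model where non$(\mathcal{N})$ is already $\aleph_2$, and use $\sigma$-centeredness to keep non$(\mathcal{N})$ from dropping. Fixing $V\models$ MA $+\,\mathfrak{c}=\aleph_2$ is in fact a necessary precaution that the paper's one-line proof leaves implicit: since $\sigma$-centered forcings preserve the outer measure of ground-model sets, a ground model containing a non-null set of size $\aleph_1$ would keep non$(\mathcal{N})=\aleph_1$ in the extension, so the ground model cannot be arbitrary. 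Also, your worry about limit stages is moot: once you know the single poset $\mathbb{Q}_{\aleph_1}$ is $\sigma$-centered (length $<\mathfrak{c}^+$ suffices for this), you apply whatever fact you need to that one forcing; no stage-by-stage preservation argument through the $\omega_1$-iteration is required, and this is exactly how the paper argues.

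The genuine gap is the step you quote as ``the standard preservation result that $\sigma$-centered forcings preserve non$(\mathcal{N})\geq\aleph_2$.'' I know of no such theorem in that generality, and the mechanism you sketch does not prove it: preservation of the outer measure of \emph{ground-model} sets says nothing about an $\aleph_1$-sized set $A$ of reals appearing in the extension, and ``every real of $V[G]$ lies in a $V$-coded null Borel set'' only covers $A$ by $\aleph_1$ many $V$-coded null sets, whose union need not be null unless add$(\mathcal{N})$ is large. The correct argument uses precisely the MA you assumed but never invoked: $\mathbb{Q}_{\aleph_1}$ is $\sigma$-centered, hence adds no random reals, so each $x\in A$ lies in some null Borel set $N_{c}$ with code $c\in V$; by the ccc, the $\aleph_1$-sized set of codes actually used is contained in a set $Y\in V$ of codes of null sets with $|Y|^V=\aleph_1$; since MA gives add$(\mathcal{N})^V=\mathfrak{c}=\aleph_2$, in $V$ the union $\bigcup_{c\in Y}N_c$ is contained in a single null Borel set $N^{*}$ coded in $V$; and the statements ``$N_c\subseteq N^{*}$'' and ``$N^{*}$ is null'' are absolute between $V$ and $V[G]$, so $A\subseteq N^{*}$ is null in the extension. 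This yields non$(\mathcal{N})=\aleph_2$ while $\mathfrak{dd}_{(0,1),\{0,1\}}^{\textsf{rel}}$ (respectively $\mathfrak{dd}_{\textsf{all},\textsf{all}}^{\textsf{rel}}$) is $\aleph_1$ by Theorems \ref{consistency1} and \ref{consistnecy2}. So your skeleton is fixable with ingredients you already placed on the table (MA in $V$, $\sigma$-centeredness, no random reals), but as written the decisive preservation claim is unjustified and its purported proof via outer-measure preservation does not go through.
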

\begin{proof}
Note that the forcings used in \ref{consistency1} and \ref{consistnecy2} are $\sigma$-centered because they are a finite support iteration of length $< \mathfrak{c}^+$ of $\sigma$-centered forcing notions.
\end{proof}

\section{Open problems}

We conclude this paper with a list of questions we were unable to solve. We proved that $\mathfrak{r}_X=\mathfrak{dd}_{X,\textsf{all}}=$ non$(\mathcal{M})$ for all $X\subseteq[0,1]$ such that $0\in X$ or $1\in X$. However, we still do not know if there exists $X\subseteq$ \textsf{all} such that $\mathfrak{dd}_{X,\textsf{all}}$ is consistently different from $\mathfrak{r}_X$. 

\begin{question}
Is $\mathfrak{dd}_{X,\textsf{all}} < \mathfrak{r}_X$ consistent for some $X\subseteq$ \textsf{all}?    
\end{question}

We also proved $\mathfrak{dd}_{\mathbb{Q}\cap(0,1),\textsf{all}}=\mathfrak{dd}_{\{1/2\}, \textsf{all}}$. Besides, using a similar argument, it is easy to see that $\mathfrak{r}_{\mathbb{Q}\cap(0,1)}=\mathfrak{r}_{1/2}$ but we do not know if $\mathfrak{r}_{\mathbb{Q}\cap(0,1)}$ is equal to $\mathfrak{r}_{(0,1)}$, or if $\mathfrak{dd}_{\mathbb{Q}\cap(0,1),\textsf{all}}$ is equal to $\mathfrak{dd}_{(0,1),\textsf{all}}$.

\begin{question}
Is $\mathfrak{r}_{1/2}<\mathfrak{r}_{(0,1)}$ consistent? Is $\mathfrak{dd}_{\{1/2\},\textsf{all}} < \mathfrak{dd}_{(0,1),\textsf{all}}$ consistent?     
\end{question}

Our proof of the theorem \ref{equalityparameters} does not involve a Tukey connection, so the dual problem is still open.

\begin{question}
Is it consistent that $\mathfrak{dd}^\perp_{\{\rho\},\textsf{all}} \neq \mathfrak{dd}^\perp_{\{\tau\},\textsf{all}}$ for some $\rho, \tau \in (0,1)$?    
\end{question}

We could not find if there is a relation between $\mathfrak{r}_{\textsf{osc}}$ and cov$(\mathcal{N})$. In particular, we could not answer if the random forcing adds a set $X$ such that $d_A(X)$ oscillates for all infinite-coinfinite set $A$ in the ground model.

\begin{question}
Is $\mathfrak{r}_{\textsf{osc}}\geq$ cov$(\mathcal{N})$?    
\end{question}

This question is related to \cite[Question 37]{brech2024densitycardinals} since $\mathfrak{r}_{\textsf{osc}}\geq\mathfrak{dd}_{\{\textsf{osc}\},\textsf{all}}$ and $\mathfrak{dd}_{\{\textsf{osc}\},\textsf{all}}$ has similar behavior to $\mathfrak{r}_{\textsf{osc}}$. Concerning the relative density number, we proved that $\mathfrak{dd}_{(0,1),\{0,1\}}^{\textsf{rel}}$ is consistently different from the continuum, but the problem is still open for $\mathfrak{dd}_{(0,1),(0,1)}^{\textsf{rel}}$.

\begin{question}
Is  $\mathfrak{dd}_{(0,1),(0,1)}^{rel}<\mathfrak{c}$ consistent?    
\end{question}

This question is related to \cite[Question 51]{brech2024densitycardinals} because there is an analogy between $\mathfrak{dd}_{(0,1),(0,1)}^{rel}$ and the rearrangement number $\mathfrak{rr}_{f}$. Besides, in \cite[Section 8]{blass2019rearrangementnumber}, it was proved that $\mathfrak{rr}_{f}$ can be consistently different from the continuum.   

\hspace{2mm}

\noindent\textbf{Acknowledgments.} The author wishes to thank his supervisor, Jörg Brendle, for his valuable ideas and insightful comments, which contributed to the development of this paper.

\bibliographystyle{amsalpha}
\bibliography{main}

\hspace{2mm}

\noindent \text{\small David Valderrama,} \textsc{\small Universidad de Los Andes (Bogotá).} 

\noindent \textit{\small E-mail address:} \text{\small \href{d.valderramah@uniandes.edu.co}{d.valderramah@uniandes.edu.co}}

\end{document}